\documentclass[a4paper,11pt]{article}
\usepackage{indentfirst, latexsym, bm}
\usepackage[numbers,sort&compress]{natbib}
\setlength{\bibsep}{0.5ex}
\textheight 24.2cm \textwidth 15.5cm \topmargin -1.6cm \oddsidemargin
0.24in
\usepackage{epsfig}
\usepackage{amsthm}
\usepackage{amsmath}
\usepackage{amsbsy}
\usepackage{amsfonts}
\usepackage{graphicx}
\usepackage{enumerate}

\newtheorem{theorem}{Theorem}[section]
\newtheorem{cor}{Corollary}[section]
\newtheorem{lem}{Lemma}[section]
\newtheorem{prop}{Proposition}[section]
\theoremstyle{definition}
\newtheorem{defn}{Definition}[section]
\theoremstyle{remark}
\newtheorem{rem}{Remark}[section]
\numberwithin{equation}{section}

\newcommand{\abs}[1]{\left\vert#1\right\vert}

\newcommand{\norm}[1]{\left\Vert#1\right\Vert}

\begin{document}

\title{A Posteriori Error Estimates of Krylov Subspace Approximations
to Matrix Functions\thanks{Supported in part by
National Basic Research Program of China  2011CB302400 and National
Science Foundation of China (Nos. 11071140, 11371219).}}
\author{Zhongxiao Jia\thanks{Department of Mathematical
Sciences, Tsinghua University, Beijing, 100084,
People's Republic of China, jiazx@tsinghua.edu.cn}
\and Hui Lv\thanks{Department of Mathematical Sciences,
Tsinghua University, Beijing, 100084,
People's Republic of China, lvh09@mails.tsinghua.edu.cn}}

\date{}

\maketitle
\begin{abstract}
Krylov subspace methods for approximating a matrix
function $f(A)$ times a vector $v$ are analyzed in this paper.
For the Arnoldi approximation to $e^{-\tau A}v$, two reliable
a posteriori error estimates are derived from the new bounds and
generalized error expansion we establish.
One of them is similar to the residual norm of an approximate solution of the
linear system, and the other one is determined critically by the
first term of the error expansion of the Arnoldi approximation to
$e^{-\tau A}v$ due to Saad. We prove that each of the two estimates is reliable
to measure the true error norm, and the second one theoretically
justifies an empirical claim by Saad. In the paper, by introducing certain
functions $\phi_k(z)$ defined recursively by the given function $f(z)$ for
certain nodes, we obtain the error
expansion of the Krylov-like approximation
for $f(z)$ sufficiently smooth, which generalizes Saad's result
on the Arnoldi approximation to $e^{-\tau A}v$. Similarly, it is shown
that the first term of the generalized error expansion can be used as
a reliable a posteriori estimate for the Krylov-like approximation to
some other matrix functions times $v$.
Numerical examples are reported to demonstrate the effectiveness of
the a posteriori error estimates for the Krylov-like approximations to
$e^{-\tau A}v$, $\cos(A)v$ and $\sin(A)v$.
\smallskip

\textbf{Keywords}. Krylov subspace method, Krylov-like approximation,
matrix functions, a posteriori error estimates, error bounds, error expansion

\smallskip

{\bf AMS Subject Classifications (2010)}. 15A16, 65F15, 65F60

\end{abstract}

\section{Introduction}

For a given matrix $A\in \mathbb{C}^{N\times N}$ and a
complex-valued function $f(z)$, the problem of numerically approximating
$f(A)v$ for a given vector $v\in\mathbb{C}^N$
arises in many applications; see, e.g., \cite{rd1,arnoldi1,book3,Philippe,other2}.
Solving the linear system $Ax=b$, which involves the reciprocal function $f(z)=z^{-1}$,
is a special instance and of great importance. Approximating the
matrix exponential $e^{A}v$ is the core of many exponential integrators
for solving systems of linear ordinary  differential equations or
time-dependent partial differential equations.
The trigonometric matrix functions acting on a vector often arise in
the solution of the second-order differential problems.
Other applications include the evaluation of $f(A)v$ for the square root function,
the sign function, the logarithm function, etc.

The matrix $A$ is generally large and sparse in many applications,
for which computing $f(A)$ by conventional algorithms for small or
medium sized $A$ is unfeasible. In this case, Krylov subspace
methods, in which the only action of $A$ is to form matrix-vector products,
have been effective tools for computing $f(A)v$. The Krylov
subspace methods for approximating $f(A)v$ first project $A$ onto a
low dimensional Krylov subspace $\mathcal{K}_m(A,v)=
span\{v,\,Av,\,\ldots,\,A^{m-1}v\}$, and then compute an approximation
to $f(A)v$ by calculating a small sized matrix function times a specific
vector \cite{arnoldi1,book3,arnoldi2}. Since computing an approximation
to $f(A)v$ needs all the basis vectors of $\mathcal{K}_m(A,v)$,
this generally limits the dimension of $\mathcal{K}_m(A,v)$ and makes
restarting necessary. The restarted Krylov subspace methods presented
in \cite{restarted1,restarted8,restarted10,restarted4,restarted5,error4}
aim at overcoming this disadvantage. However, restarting may slow
down the convergence substantially, and may even fail to converge
\cite{restarted2,restarted3,restarted8,error4}. A deflated
restarting technique is proposed in \cite{deflaterestarted1} that is
adapted from \cite{deflaterestarted2,deflaterestarted3} for eigenvalue
problems and linear systems so as to accelerate the convergence.

A priori error estimates for Krylov subspace approximations
to $f(A)v$ can be found in a number of papers, e.g.,
\cite{error5,polykry3,polykry4,polykry5}.
As it is known, the convergence of the approximations depends on the
regularity of the function $f(z)$ over the given
domain, the spectral properties of $A$ and
the choice of the interpolation nodes \cite{polykry4}.
In particular, there have been several a priori error estimates available
for Krylov subspace methods to approximate $e^{-\tau A}v$;
see, e.g., \cite{rd3,arnoldi1,arnoldi2}. As it has turned out,
the methods may exhibit the superlinear convergence \cite{error2}. A few
quantitatively similar bounds have been derived in \cite{error1,error7}.
The results in \cite{error3} also reveal the superlinear convergence
of the approximations, but they are less sharp than those in
\cite{error1,error2,error7}, at least experimentally \cite{rd4}.
The  bounds in \cite{error8,doc1} relate the convergence
to the condition number of $A$ when it is symmetric
positive definite, and can be quite accurate. Obviously,
these results help to understand the methods, but are not applicable in practical
computations as they involve the spectrum or the field of
values of $A$.

For many familiar linear algebra problems, such as the linear system,
the eigenvalue problem, the least squares problem, and the singular value
decomposition, there is a clear residual notion that can be
used for determining the convergence and designing stopping criteria
in iterative methods. However,
the Krylov subspace approximation to $f(A)v$ is not naturally equipped
with a standard residual notion. In fact, the lack of a residual notion is
a central problem in computing matrix functions acting on a vector.
Effective a posteriori error estimates are,
therefore, very appealing and crucial to design practical algorithms.
Effective terminations of iterative algorithms have attracted much attention
over the last two decades. In \cite{other5,polykry4}, the authors have
introduced a generalized residual of the Arnoldi approximation to $f(A)v$.
In \cite{botchev}, the authors have provided more details when $f(z)=e^{z}$.
For $A$ symmetric, a Krylov subspace method based on the Chebyshev series expansion
for the exponential function in the spectral interval of $A$ has been proposed
in \cite{polykry1}.
The method uses very low storage, but requires accurate approximation of
the spectral interval of $A$. An a posteriori error estimate for this method
has been derived in \cite{polykry1}. In \cite{polykry2}, a posteriori error
estimates have been established for several polynomial Krylov approximations to a
class of matrix functions, which are formulated in the general integral form,
acting on a vector. These functions include
exponential-like functions, cosine, and the sinc function
arising in the solution of second-order differential problems. These
estimates cannot be unified, and each of them is function dependent and
quite complicated. In \cite{arnoldi2}, Saad established an error expansion
for the Arnoldi approximation to $e^{A}v$, which is exactly our later expansion
(\ref{saadbound}) for $\tau=-1$ in Corollary \ref{error expansion2}.
Empirically, he claimed that
the first term of the error expansion can be a reliable a posteriori
error estimate for the Arnoldi approximation to $e^{A}v$; see later
(\ref{criter}).
However, its theoretical justification has not yet been given hitherto.

In this paper, by introducing certain functions $\phi_k(z)$ defined
recursively by the given function $f(z)$ for certain nodes,
we obtain an error expansion of the Krylov-like approximation
to $f(A)v$ for $f(z)$ sufficiently smooth. The error expansion is an infinite
series, which generalizes Saad's result on the Arnoldi approximation to
$e^{A}v$. With a specific choice of the nodes, the infinite
series reduces to a finite term series.  We establish two new
upper bounds for the Arnoldi approximation to $e^{-\tau A}v$ where $\tau$ is
often the time step parameter in a finite difference time-stepping method.
In the case that $A$ is Hermitian, we derive more compact results that are
more convenient to use. For $e^{-\tau A}v$, by a rigorous
analysis of the infinite series expansion of the error, we derive a reliable a
posteriori error estimate, theoretically proving
that the first term in the infinite series expansion generally suffices to provide a
good estimate for the true error and confirming the empirical claim due to
Saad \cite{arnoldi2}. We also show why the first term of the error expansion
provides a new reliable a posteriori estimate for the Krylov-like approximation
when $f(z)$ is sufficiently smooth.
As a consequence, the Krylov-like approximations to $e^{-\tau A}v$ and more
functions acting on a vector are equipped with theoretically effective stopping
criteria. Some typical numerical
examples are reported to illustrate the effectiveness of our a posteriori
error estimates for the Krylov-like approximations to
not only $e^{-\tau A}v$ but also $\sin(A)v$ and $\cos(A)v$.

The paper is organized as follows. In Section~\ref{sec:2}, we review some
definitions and properties of matrix functions. We also describe
the framework and some basic properties of Krylov subspace
approximations to $f(A)v$ based on the Krylov-like decomposition. In
Section~\ref{sec:3}, we establish the error expansion of the Krylov-like
approximation to $f(A)v$ for $f(z)$ sufficiently smooth, which includes
$f(z)=e^z$ as a special case. In Section~\ref{sec:4},
we present some upper bounds for the error of the Arnoldi approximation to
$e^{-\tau A}v$ and derive two more compact bounds in the case that
$A$ is Hermitian. In Section~\ref{sec:5}, we derive some a posteriori error
estimates from the bounds and expansions we establish,
and justify the rationale of the first term of each error
expansion as an error estimate. The numerical results are then reported to
illustrate the sharpness of our a posteriori error estimates for
the matrix exponential, the sine and cosine functions. Finally,
in Section~\ref{sec:6}  we conclude the paper with some remarks.

Throughout the paper let $A$ be a given matrix of order $N$, denote by
$\norm{\cdot}$ the Euclidean vector norm and the
matrix 2-norm, by the asterisk $*$ the conjugate transpose of a
matrix or vector, and by the superscript $T$ the transpose of a matrix or vector.
The set $F(A)\equiv \{x^*Ax:x\in\mathbb{C}^N,\ x^*x=1\}$
denotes the field of values of $A$, $spec(A)$ is the spectrum of $A$,
and $\mathbf{0}$ denotes a zero matrix with appropriate size.

\section{Krylov subspace approximations to $f(A)v$} \label{sec:2}
In this section, we review some definitions and properties of matrix
functions to be used and present a class of popular Krylov subspace
approximations to $f(A)v$.

\subsection{Polynomial interpolatory properties of $f(A)$} \label{subsec:2.1}

The matrix function $f(A)$ can be equivalently defined via
the Jordan canonical form, the Hermite interpolation or the
Cauchy integral theorem; see \cite[p. 383-436]{book2} for
details. We review the definition via the Hermite interpolation and some
properties associated with it.

\begin{defn}\label{def}
Let $A$ have the minimal polynomial
$q_A(z)=(z-\lambda_1)^{r_1} \cdots (z-\lambda_{\mu})^{r_{\mu}},$
where $\lambda_1,\ldots,\lambda_{\mu}$ are distinct and all $r_i\geq
1$, let $f(z)$ be a given scalar-valued function whose domain includes
the points $\lambda_1,\ldots,\lambda_{\mu}$, and assume that each
$\lambda_i$ is in the interior of the
domain and $f(z)$ is $(r_i-1)$ times differentiable at
$\lambda_i$. Then $f(A)\equiv p(A),$ where $p(z)$ is the unique polynomial
of degree $\sum_{i=1}^{\mu}r_i-1$ that satisfies the interpolation conditions
\begin{equation*}
  p^{(j)}(\lambda_i)=f^{(j)}(\lambda_i),\ j=0,1,\ldots,r_i-1,\ i=1,\ldots,\mu.
\end{equation*}
\end{defn}

\begin{prop}\label{prop1}
The polynomial $p(z)$ \textit{interpolating $f(z)$ and its
derivatives at the roots of $q_A(z)=0$} can be given explicitly by
the Hermite interpolating polynomial. Its Newtonian divided difference
form is
\begin{eqnarray*}
p(z)& = &f[x_1] + f[x_1,x_2](z-x_1) + f[x_1,x_2,x_3](z-x_1)(z-x_2) + \cdots \nonumber\\
 & & + f[x_1,x_2,\ldots,x_m](z-x_1)(z-x_2)\cdots(z-x_{m-1}),
\end{eqnarray*}
where $m=deg\ q_A(z)$, the set $\{x_i\}_{i=1}^m$ comprises the
distinct eigenvalues $\lambda_1,\ldots,\lambda_\mu$ with $\lambda_i$
having multiplicity $r_i$, and $f[x_1,x_2,\ldots,x_k]$ is the divided difference of
order $k-1$ at $x_1,x_2,\ldots,x_k$ with $f[x_1]=f(x_1)$.
\end{prop}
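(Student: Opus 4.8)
The plan is to exploit uniqueness: Definition \ref{def} already guarantees that there is a unique polynomial of degree $m-1$ (with $m=\deg q_A$) satisfying the Hermite conditions $p^{(j)}(\lambda_i)=f^{(j)}(\lambda_i)$, so it suffices to show that the displayed Newton divided difference expression is a polynomial of degree at most $m-1$ that meets exactly those conditions. The degree claim is immediate, since the last summand $f[x_1,\dots,x_m]\prod_{j=1}^{m-1}(z-x_j)$ is the term of highest degree, namely $m-1$. The substance is therefore the verification of the interpolation conditions at the confluent nodes, where each $\lambda_i$ is repeated $r_i$ times among $x_1,\dots,x_m$.

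To set this up I would first recall the confluent definition of divided differences, $f[\underbrace{x,\dots,x}_{j+1}]=f^{(j)}(x)/j!$, together with the Hermite--Genocchi representation $f[y_1,\dots,y_k]=\int_{\Sigma_{k-1}}f^{(k-1)}\!\bigl(\sum_{i} t_i y_i\bigr)\,dt$ over the standard simplex. This representation shows at once that $f[y_1,\dots,y_k]$ is a well-defined, symmetric, and (for $f$ sufficiently smooth) continuous function of its arguments \emph{even as nodes coalesce}, which is precisely the regularity the confluent case needs. The verification itself I would carry out by a confluence limit. Order the nodes so that the $r_i$ copies of $\lambda_i$ are consecutive, and perturb them to pairwise distinct values $x_1^{\eps},\dots,x_m^{\eps}$ with $x_j^{\eps}\To x_j$ as $\eps\To 0$. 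For distinct nodes the Newton formula is the classical Lagrange interpolant $p_\eps$, which satisfies $p_\eps(x_j^{\eps})=f(x_j^{\eps})$ for all $j$. By continuity of the divided differences the coefficients of $p_\eps$ converge to those of $p$, so $p_\eps\To p$ together with all derivatives, uniformly on compact sets.

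The interpolation conditions then pass to the limit. Within each block of $r_i$ nodes merging to $\lambda_i$, the $r_i$ equalities $p_\eps(x^{\eps})=f(x^{\eps})$, combined by repeated divided differencing of $p_\eps - f$ across the block, force the first $r_i$ derivatives of $p_\eps-f$ to vanish at the cluster in the limit, yielding $p^{(j)}(\lambda_i)=f^{(j)}(\lambda_i)$ for $j=0,1,\dots,r_i-1$ and each $i$. Hence $p$ satisfies the Hermite conditions and, being of degree $m-1$, coincides with the interpolant of Definition \ref{def}. (Alternatively one can argue purely algebraically by induction on the number of nodes, using the recursion $f[x_1,\dots,x_k]=\bigl(f[x_2,\dots,x_k]-f[x_1,\dots,x_{k-1}]\bigr)/(x_k-x_1)$ to show the partial Newton sums match the prescribed data successively.) The main obstacle is making the confluence rigorous, namely controlling derivatives of the interpolant as nodes coalesce; the Hermite--Genocchi integral is the cleanest device here, since it delivers both the well-definedness of the confluent divided differences and the smooth dependence on the nodes that the limit argument requires.
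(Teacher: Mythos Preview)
Your argument is sound: the degree check is trivial, the Hermite--Genocchi integral gives exactly the continuity of divided differences under node coalescence that you need, and the confluence-limit argument (vanishing of the divided differences of $p_\eps-f$ across each block passing to vanishing derivatives of $p-f$ at $\lambda_i$) is correct. One small point worth making explicit is that the limit step works for complex nodes as well, since you are relying on the Hermite--Genocchi continuity rather than a real mean-value theorem.

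That said, there is nothing to compare against: the paper states Proposition~\ref{prop1} as a classical fact and offers no proof of its own. It is quoted as background on the Hermite interpolation definition of $f(A)$ (cf.\ Higham's \emph{Functions of Matrices}), so the authors simply take the Newton form for granted. Your write-up is a perfectly standard and correct justification of that quoted fact; the alternative induction you mention at the end would also do the job and is arguably shorter if one is willing to handle the case $x_k=x_1$ in the recursion separately.
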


Bounds for $\norm{f(A)}$ are useful for both theoretical and practical purposes.
The following bound, a variant of Theorem 4.28 in \cite[p. 103]{book3}, is
derived in terms of the Schur decomposition .

\begin{theorem}\label{boundf}
Let $Q^*AQ=D+U$ be a Schur
decomposition of $A$, where $D$ is diagonal and $U$ is strictly upper triangular.
If $f(z)$ is analytic
on a closed convex set $\Omega$ containing $spec(A)$. Then
\begin{equation}\label{boundf1}
    \norm{f(A)}\leq\sum_{i=0}^{n-1} \sup_{z\in\Omega}
    \abs{f^{(i)}(z)}\frac{\norm{U}_F^i}{i!},
\end{equation}
where $\norm{U}_F$ denotes the Frobenius norm of $U$.
\end{theorem}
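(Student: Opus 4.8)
The plan is to follow the strategy behind Theorem~4.28 of \cite{book3}, exploiting the Schur form together with the entrywise divided-difference representation of a function of a triangular matrix. First I would strip off the unitary factor: since $Q$ is unitary and the $2$-norm is unitarily invariant, $\norm{f(A)}=\norm{Q f(D+U) Q^*}=\norm{f(T)}$ with $T=D+U$ upper triangular, so it suffices to bound $\norm{f(T)}$. Denote by $n$ the order of $A$, so $U$ is $n\times n$ and strictly upper triangular, hence nilpotent with $U^{n}=\mathbf{0}$; let $\lambda_1,\ldots,\lambda_n$ be the diagonal entries of $D$, which are the eigenvalues of $A$ and therefore lie in $\Omega$.

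The key tool is the explicit path-sum representation of a function of a triangular matrix (see \cite{book3}): for $i\le j$,
\[
  [f(T)]_{ij}=\sum_{k=0}^{n-1}\ \sum_{i=s_0<s_1<\cdots<s_k=j} U_{s_0 s_1}U_{s_1 s_2}\cdots U_{s_{k-1}s_k}\, f[\lambda_{s_0},\lambda_{s_1},\ldots,\lambda_{s_k}]=:\sum_{k=0}^{n-1}[M_k]_{ij},
\]
where $f[\cdots]$ is the (possibly confluent) divided difference, the inner sum runs over strictly increasing index sequences because $U$ is strictly upper triangular, and $M_k$ collects all contributions from paths with exactly $k$ edges. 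The range is finite, $0\le k\le n-1$, precisely because $U^{n}=\mathbf{0}$; note $M_0=f(D)$ is diagonal. Thus $f(T)=\sum_{k=0}^{n-1}M_k$, and it remains to bound each $\norm{M_k}$.

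The crucial estimate is the bound on divided differences of order $k$. Because $\Omega$ is convex and $f$ is analytic there, the Hermite--Genocchi formula represents the order-$k$ divided difference as $f[\mu_0,\ldots,\mu_k]=\int_{\Sigma_k} f^{(k)}(t_0\mu_0+\cdots+t_k\mu_k)\,d\sigma$, where $\Sigma_k=\{t_l\ge 0,\ \sum_l t_l=1\}$ is the standard simplex of volume $1/k!$; since each integrand argument is a convex combination of the nodes, it remains in $\Omega$, and hence $\abs{f[\mu_0,\ldots,\mu_k]}\le \frac{1}{k!}\sup_{z\in\Omega}\abs{f^{(k)}(z)}$ for all $\mu_0,\ldots,\mu_k\in\Omega$. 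Applying this inside $M_k$ and using $\sum_{\mathrm{paths}}\prod_l\abs{U_{s_{l-1}s_l}}=[\,\abs{U}^{k}\,]_{ij}$ (the entrywise-modulus matrix raised to the $k$th power, where increasing paths arise automatically from the strict upper triangularity), I obtain the entrywise bound $\abs{M_k}\le \frac{1}{k!}\sup_{z\in\Omega}\abs{f^{(k)}(z)}\,\abs{U}^{k}$.

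Finally I would pass to norms. Using $\norm{M_k}\le\norm{\,\abs{M_k}\,}$ (monotonicity of the $2$-norm under entrywise domination by a nonnegative matrix), submultiplicativity, and $\norm{\,\abs{U}\,}\le\norm{\,\abs{U}\,}_F=\norm{U}_F$, it follows that $\norm{M_k}\le \frac{1}{k!}\sup_{z\in\Omega}\abs{f^{(k)}(z)}\,\norm{U}_F^{k}$. The triangle inequality over the finite sum then gives $\norm{f(A)}=\norm{f(T)}\le\sum_{k=0}^{n-1}\sup_{z\in\Omega}\abs{f^{(k)}(z)}\,\norm{U}_F^{k}/k!$, which is exactly (\ref{boundf1}). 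The main obstacle I anticipate is justifying the path-sum/divided-difference representation together with the confluent case of repeated eigenvalues; the Hermite--Genocchi integral handles coincident nodes automatically, and the only other delicate point is the two-step $2$-norm estimate $\norm{M_k}\le\norm{\,\abs{M_k}\,}\le\frac{1}{k!}\sup_{z\in\Omega}\abs{f^{(k)}(z)}\,\norm{U}_F^{k}$, which must be carried out via the entrywise bound rather than directly.
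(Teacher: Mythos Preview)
Your argument is correct and is essentially the standard proof behind Theorem~4.28 of \cite{book3}: reduce to the triangular Schur factor, use the path-sum/divided-difference representation of $f(T)$, bound each divided difference via Hermite--Genocchi by $\sup_{\Omega}|f^{(k)}|/k!$, and then pass from entrywise domination to the $2$-norm via $\norm{\,|U|^k}\le\norm{U}_F^k$. Note, however, that the paper itself does \emph{not} supply a proof of this theorem; it is stated as a known variant of \cite[Theorem~4.28]{book3} and simply quoted, so there is no ``paper's own proof'' to compare against---your write-up is precisely the argument the citation points to.
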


\subsection{The Krylov-like decomposition and computation of $f(A)v$}
\label{subsec:2.2}

The Arnoldi approximation to $f(A)v$ is based on the Arnoldi decomposition
\begin{equation}\label{Arnoldi decomposition}
AV_m=V_mH_m+h_{m+1,m}v_{m+1}e_m^{T},
\end{equation}
where the columns of $V_m=[v_1,\,v_2,\,\ldots,\,v_m]$ form an
orthonormal basis of the Krylov subspace $\mathcal{K}_m(A,v)=
span\{v,\,Av,\,\ldots,\,A^{m-1}v\}$ with $v_1=v/\norm{v}$, $H_m=[h_{i,j}]$
is an unreduced upper Hessenberg matrix and $e_m\in \mathbb{R}^m$
denotes the $m$th unit coordinate vector. The Arnoldi approximation
to $f(A)v$ is given by
\begin{equation}\label{Arnoldi approximation}
f_m=\norm{v}V_mf(H_m)e_1.
\end{equation}

For $H_m$, there is the following well-known property \cite{other4}.

\begin{prop}\label{prop2}
Each eigenvalue of $H_m$ has geometric multiplicity equal to one, and
the minimal polynomial of $H_m$ is its characteristic polynomial.
\end{prop}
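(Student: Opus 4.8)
The plan is to prove Proposition~\ref{prop2}, which asserts two facts about the unreduced upper Hessenberg matrix $H_m$ arising in the Arnoldi decomposition~(\ref{Arnoldi decomposition}): first, that every eigenvalue of $H_m$ has geometric multiplicity one, and second, that the minimal polynomial of $H_m$ coincides with its characteristic polynomial. The two statements are in fact equivalent formulations of the same structural property, so I would organize the proof around the single underlying observation that $H_m$ is \emph{nonderogatory}.

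First I would establish the geometric multiplicity claim by exploiting the unreduced structure, namely that $h_{i+1,i}\neq 0$ for all $i=1,\ldots,m-1$. Fix an eigenvalue $\lambda$ of $H_m$ and consider the matrix $H_m-\lambda I$, which is again upper Hessenberg with the same nonzero subdiagonal entries $h_{i+1,i}$. The key step is to exhibit an $(m-1)\times(m-1)$ submatrix of $H_m-\lambda I$ that is nonsingular, thereby showing $\mathrm{rank}(H_m-\lambda I)\geq m-1$. The natural choice is the submatrix obtained by deleting the first column and the last row: this leaves a lower triangular matrix whose diagonal entries are exactly the subdiagonal entries $h_{2,1},h_{3,2},\ldots,h_{m,m-1}$, all nonzero, so its determinant is $\prod_{i=1}^{m-1}h_{i+1,i}\neq 0$. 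Since $\lambda$ is an eigenvalue, $\mathrm{rank}(H_m-\lambda I)\leq m-1$, hence equals $m-1$, and by the rank--nullity theorem the eigenspace $\ker(H_m-\lambda I)$ is one-dimensional. This gives geometric multiplicity one for every eigenvalue.

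Next I would derive the minimal-polynomial statement from the geometric-multiplicity statement. The general fact to invoke is that the minimal polynomial of a matrix is $\prod_j (z-\lambda_j)^{s_j}$, where $s_j$ is the size of the \emph{largest} Jordan block associated with $\lambda_j$, whereas the characteristic polynomial is $\prod_j (z-\lambda_j)^{a_j}$ with $a_j$ the algebraic multiplicity (the sum of all Jordan block sizes for $\lambda_j$). The number of distinct Jordan blocks for $\lambda_j$ equals the geometric multiplicity, which we have just shown is $1$; thus there is a single Jordan block for each eigenvalue, forcing $s_j=a_j$. Consequently the minimal and characteristic polynomials have identical factorizations and coincide.

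I do not expect a serious obstacle here, as the result is classical and the argument is short. The one point requiring care is the nonsingular-submatrix step: one must correctly identify which row and column to delete so that the remaining block is triangular with the nonzero subdiagonal on its diagonal, and verify that the unreduced hypothesis $h_{i+1,i}\neq 0$ is precisely what makes this determinant nonzero. An equivalent and perhaps cleaner route to the same conclusion is to verify directly that the vector $e_1$ is a cyclic (generating) vector for $H_m$, i.e.\ that $\{e_1,H_me_1,\ldots,H_m^{m-1}e_1\}$ is linearly independent; this again follows from the nonzero subdiagonal entries, since $H_m^k e_1$ has a nonzero entry in position $k+1$ that cannot be produced by lower powers, and the existence of a cyclic vector is well known to be equivalent to the minimal polynomial equaling the characteristic polynomial. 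Either formulation completes the proof.
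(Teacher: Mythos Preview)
The paper does not actually prove this proposition; it merely states it as a well-known property and cites Parlett~\cite{other4}. Your argument therefore supplies what the paper omits, and it is correct in substance.

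There is, however, a small indexing slip in the nonsingular-submatrix step. If you delete the first column and the last row of the upper Hessenberg matrix $H_m-\lambda I$, the resulting $(m-1)\times(m-1)$ block has entries $(H_m-\lambda I)_{i,\,j+1}$, which is \emph{not} lower triangular (the upper-triangular part of a Hessenberg matrix is generally full), and its diagonal picks up the superdiagonal entries $h_{i,i+1}$ rather than the subdiagonal ones. The correct choice is to delete the \emph{first row and the last column}: the remaining block has entries $(H_m-\lambda I)_{i+1,\,j}$, which vanishes for $i>j$ by the Hessenberg structure, so it is \emph{upper} triangular with diagonal entries $h_{2,1},h_{3,2},\ldots,h_{m,m-1}\neq 0$. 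With this correction your rank argument goes through verbatim, and the subsequent Jordan-block reasoning that forces the minimal and characteristic polynomials to coincide is standard and correct. The alternative cyclic-vector route you sketch at the end is equally valid and avoids the bookkeeping issue altogether.
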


In \cite{deflaterestarted1}, a more general decomposition than
(\ref{Arnoldi decomposition}), called
the Krylov-like decomposition, is introduced, and the
the associated Krylov-like approximation to $f(A)v$ is given.
The Krylov-like decomposition of $A$ with respect to $\mathcal{K}_m(A,v)$
is of the form

\begin{equation}\label{Krylov decomposition}
    AW_{m+l}=W_{m+l}K_{m+l}+wk_{m+l}^{T},
\end{equation}
where $K_{m+l}\in \mathbb{C}^{(m+l)\times (m+l)},$
$W_{m+l}\in \mathbb{C}^{N\times (m+l)}$ with $range(W_{m+l})=\mathcal{K}_m(A,v)$,
$w\in \mathcal{K}_{m+1}(A,v)\setminus \mathcal{K}_m(A,v)$ and
$k_{m+l}\in \mathbb{C}^{m+l}$.

Let $f(z)$ be a function such that $f(K_{m+l})$ is defined. Then the Krylov-like
approximation to $f(A)v$ associated with (\ref{Krylov decomposition}) is given by
\begin{equation}\label{Krylov approximation}
    \hat{f}_m=W_{m+l}f(K_{m+l})\hat{b},
\end{equation}
where $\hat{b}\in \mathbb{C}^{m+l}$ is any vector such that $W_{m+l}\hat{b}=v.$

Since the vector $w$ lies in $\mathcal{K}_{m+1}(A,v)\setminus \mathcal{K}_m(A,v)$,
it can be expressed as $w=p_m(A)v$ with a unique polynomial $p_m(z)$ of exact degree
$m$. The following result is proved in \cite{deflaterestarted1} for $\hat {f}_m$.

\begin{theorem}\label{property1}
For the polynomial $p_m(z)$ defined by $w=p_m(A)v$, the Krylov-like approximation
(\ref{Krylov approximation}) to $f(A)v$ can be characterized as
$\hat{f}_m=q_{m-1}(A)v$, where $q_{m-1}(z)$ interpolates the function $f(z)$ in the
Hermitian sense at zeros of $p_m(z)$, i.e., at some but, in general, not all
eigenvalues of $K_{m+l}$.
\end{theorem}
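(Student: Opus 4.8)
The plan is to work with the resolvent of $K_{m+l}$ and to pass to $f$ through its Cauchy integral representation, reducing the whole statement to a single scalar identity. Assume first that $f$ is analytic on and inside a contour $\Gamma$ enclosing both $spec(A)$ and the eigenvalues of $K_{m+l}$; the merely smooth case then follows from the Hermite-interpolation definition of the matrix function (Definition \ref{def}). First I would fix $\zeta\in\Gamma$, set $x(\zeta)=(\zeta I-K_{m+l})^{-1}\hat b$ and introduce the scalar $\eta(\zeta)=k_{m+l}^{T}x(\zeta)$. Applying $W_{m+l}$ to $(\zeta I-K_{m+l})x(\zeta)=\hat b$ and using (\ref{Krylov decomposition}) to rewrite $W_{m+l}K_{m+l}=AW_{m+l}-wk_{m+l}^{T}$, I would obtain $(\zeta I-A)W_{m+l}x(\zeta)+\eta(\zeta)w=v$, hence $W_{m+l}x(\zeta)=(\zeta I-A)^{-1}\bigl[v-\eta(\zeta)w\bigr]$, which since $w=p_m(A)v$ reads $W_{m+l}x(\zeta)=(\zeta I-A)^{-1}\bigl[I-\eta(\zeta)p_m(A)\bigr]v$.

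The crux is the scalar identity $\eta(\zeta)=1/p_m(\zeta)$, and this is the step I expect to be the main obstacle, since it is where the geometry of the decomposition is converted into the algebraic normalization that pins down $\eta$. Because $range(W_{m+l})=\mathcal{K}_m(A,v)$, the vector $W_{m+l}x(\zeta)$ lies in $\mathcal{K}_m(A,v)$ for every $\zeta$. Writing $g_\zeta(z)=1-\eta(\zeta)p_m(z)$, a polynomial of exact degree $m$ in $z$, and splitting $g_\zeta(A)=g_\zeta(\zeta)I+(A-\zeta I)h_\zeta(A)$ with $\deg h_\zeta=m-1$, I get $(\zeta I-A)^{-1}g_\zeta(A)v=g_\zeta(\zeta)(\zeta I-A)^{-1}v-h_\zeta(A)v$. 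Here $h_\zeta(A)v\in\mathcal{K}_m(A,v)$ automatically, whereas the hypothesis $w\in\mathcal{K}_{m+1}(A,v)\setminus\mathcal{K}_m(A,v)$ forces the grade of $v$ to exceed $m$, so that $(\zeta I-A)^{-1}v\notin\mathcal{K}_m(A,v)$ for $\zeta\notin spec(A)$. Membership of the whole vector in $\mathcal{K}_m(A,v)$ then forces $g_\zeta(\zeta)=0$, i.e. $\eta(\zeta)p_m(\zeta)=1$.

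With $\eta=1/p_m$ in hand, I would insert the Cauchy representation $f(K_{m+l})=\frac{1}{2\pi i}\oint_\Gamma f(\zeta)(\zeta I-K_{m+l})^{-1}\,d\zeta$ into $\hat f_m=W_{m+l}f(K_{m+l})\hat b$, so that $\hat f_m=\frac{1}{2\pi i}\oint_\Gamma f(\zeta)W_{m+l}x(\zeta)\,d\zeta$. Using the resolvent splitting $(\zeta I-A)^{-1}p_m(A)=p_m(\zeta)(\zeta I-A)^{-1}-\Pi(\zeta,A)$ with $\Pi(\zeta,z)=\frac{p_m(z)-p_m(\zeta)}{z-\zeta}$, the contribution carrying the factor $p_m(\zeta)$ is exactly $\frac{1}{2\pi i}\oint_\Gamma f(\zeta)(\zeta I-A)^{-1}v\,d\zeta=f(A)v$ after cancellation with $\eta(\zeta)=1/p_m(\zeta)$, and it cancels against the leading $(\zeta I-A)^{-1}v$ term, leaving $\hat f_m=q_{m-1}(A)v$ with $q_{m-1}(z)=\frac{1}{2\pi i}\oint_\Gamma \frac{\bigl(p_m(\zeta)-p_m(z)\bigr)f(\zeta)}{p_m(\zeta)(\zeta-z)}\,d\zeta$. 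Since $\frac{p_m(\zeta)-p_m(z)}{\zeta-z}$ is a polynomial of degree $m-1$ in $z$, this $q_{m-1}$ has degree at most $m-1$, and the displayed contour integral is precisely the classical Hermite-interpolation formula for the polynomial matching $f$, with multiplicities, at the roots of $p_m$.

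Finally, I would identify the nodes. The identity $k_{m+l}^{T}(\zeta I-K_{m+l})^{-1}\hat b=1/p_m(\zeta)$ shows that every pole of the rational function on the left, which can occur only at an eigenvalue of $K_{m+l}$, is a zero of $p_m$, and conversely; hence the $m$ roots of $p_m$ are among the $m+l$ eigenvalues of $K_{m+l}$, forming a proper subset whenever $l\ge 1$. This gives the stated characterization: $\hat f_m=q_{m-1}(A)v$ with $q_{m-1}$ interpolating $f$ in the Hermite sense at the zeros of $p_m$, i.e. at some but in general not all eigenvalues of $K_{m+l}$. For merely smooth $f$ the same conclusion follows by replacing $f$ with its Hermite interpolant at the eigenvalues of $K_{m+l}$ as in Definition \ref{def}, which alters neither $\hat f_m$ nor the values and derivatives of $f$ used at the roots of $p_m$.
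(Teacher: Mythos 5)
Your proposal is correct, but it cannot be "the same approach as the paper" for the simple reason that the paper does not prove Theorem~\ref{property1} at all: it quotes the result from \cite{deflaterestarted1}, where the standard argument runs through exactness of the Krylov-like approximation on polynomials of degree at most $m-1$ and a divided-difference/characteristic-polynomial computation. Your route is a genuinely different, self-contained derivation via the resolvent and the Cauchy integral: the reduction of everything to the scalar identity $k_{m+l}^{T}(\zeta I-K_{m+l})^{-1}\hat b=1/p_m(\zeta)$ is the right pivot, and your proof of it --- $W_{m+l}x(\zeta)=(\zeta I-A)^{-1}\bigl[I-\eta(\zeta)p_m(A)\bigr]v$ must lie in $\mathcal{K}_m(A,v)$ while the grade of $v$ exceeds $m$, so the coefficient $g_\zeta(\zeta)$ of the ``non-Krylov'' direction $(\zeta I-A)^{-1}v$ must vanish --- is sound, since a relation $(\zeta I-A)s(A)v=v$ with $\deg s\le m-1$ would give a nonzero annihilating polynomial of $v$ of degree $\le m$. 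The concluding contour integral is indeed the Hermite--Walsh interpolation formula at the roots of $p_m$, and the pole comparison correctly places those roots among the eigenvalues of $K_{m+l}$. What this buys over the cited proof is a one-line identification of the interpolation nodes and an argument that transfers verbatim to any decomposition of the form (\ref{Krylov decomposition}). Three small points deserve tightening: (i) $g_\zeta$ has exact degree $m$ only when $\eta(\zeta)\neq 0$, though the case $\eta(\zeta)=0$ is excluded by the same non-membership argument; (ii) ``and conversely'' overstates the node identification --- only the inclusion of the zeros of $p_m$ in $spec(K_{m+l})$ holds, which is all the theorem asserts; (iii) in the reduction from analytic to merely smooth $f$ you should note that the multiplicity of each zero of $p_m$ cannot exceed the index of that eigenvalue in the minimal polynomial of $K_{m+l}$ (again by comparing pole orders in $\eta(\zeta)=1/p_m(\zeta)$), so replacing $f$ by its Hermite interpolant at $spec(K_{m+l})$ as in Definition~\ref{def} really does preserve all derivative values used at the roots of $p_m$.
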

For more properties of the Krylov-like approximation to $f(A)v$, one can
refer to \cite{deflaterestarted1}. Specifically,
the Krylov-like decomposition (\ref{Krylov decomposition}) includes
the following three important and commonly used decompositions:
\begin{itemize}
  \item The Arnoldi decomposition if $l=0$ and the columns of $W_m$ are
  orthonormal and form an ascending basis of $\mathcal{K}_m(A,v)$, that is, the
  first $j$ columns of $W_m$ generate $\mathcal{K}_j(A,v)$,  and $K_m$ is
  upper Hessenberg.

  \item The Arnoldi-like decomposition that corresponds to
  the restarted Arnoldi approximation to $f(A)v$
  if $l=0$ and the columns of $W_m$ form an ascending basis of $\mathcal{K}_m(A,v)$,
  and $K_m$ is upper Hessenberg; see \cite{restarted1,restarted8} for details.

  \item The Krylov-like decomposition that corresponds to a restarted Arnoldi
  method with deflation, where an $l$ dimensional approximate invariant
  subspace of $A$ is augmented to an $m$ dimensional
  Krylov subspace at each restart \cite{deflaterestarted1}.
\end{itemize}

Define the error
\begin{equation}\label{error}
E_m(f)\equiv f(A)v-W_{m+l}f(K_{m+l})\hat{b}.
\end{equation}
Unlike those basic linear algebra problems, such as the linear system,
the eigenvalue problem, the least squares problem and the singular value
decomposition, which have exact a posteriori residual norms that
are used for stopping criteria in iterative methods,
the Krylov subspace approximation to $f(A)v$ is not naturally equipped
with a stopping criterion since when approximating $f(A)v$
there is no immediate quantity analogous to
the residual in the mentioned basic linear algebra problems and
the error norm $\norm{E_m(f)}$ cannot be computed explicitly. Therefore, it is
crucial to establish reliable and accurate a posteriori error
estimates in practical computations. As it has turned out in the literature,
this task is nontrivial. We will devote ourselves to this task in the sequel.

\section{The expansion of the error $E_m(f)$} \label{sec:3}

In this section, we analyze the error $E_m(f)$ produced by the Krylov-like
approximation (\ref{Krylov approximation}). Inspired by the error expansion
derived by Saad \cite{arnoldi2} for the Arnoldi approximation to $e^Av$, which
is our later (\ref{saadbound}) when $\tau=-1$ , we establish a more general
form of the error expansion for all sufficiently smooth functions $f(z)$.
Our result differs from Saad's in that
his expansion formula is only for the Arnoldi approximation for $f(z)=e^z$
and is expressed in terms of $A^kv_{m+1},\
k=0,1,\ldots, \infty$, ours is more general, insightful and informative and
applies to the Krylov-like approximations for all sufficiently smooth functions
$f(z)$.

Throughout the paper, assume that $f(z)$ is analytic in a closed
convex set $\Omega$ and on its boundary, which contains
the field of values $F(A)$ and the field of
values $F(K_{m+l})$. Let the sequence
$\{z_i\}_{i=0}^{\infty}$ belong to the set $\Omega$ and
be ordered so that equal points are contiguous, i.e.,
\begin{equation} \label{order}
z_i=z_j\ (i<j)\ \rightarrow z_i=z_{i+1}=\cdots=z_j,
\end{equation}
and define the function sequence $\{\phi_{k}\}_{k=0}^{\infty}$ by the recurrence
\begin{equation}\label{phi}
\left\{\begin{array}{rcl}
\phi_0(z) & = & f(z),\\
\phi_{k+1}(z) & = & \dfrac{\phi_k(z)-\phi_k(z_k)}{z-z_k},\ k \geq 0.
\end{array}
\right.
\end{equation}
Noting that $\phi_k(z_k)$ is well defined by continuity,
it is clear that these functions are analytic for all $k$.

Let $f[z_0,\,z_1,\ldots,z_{k}]$ denote the $k$th divided differences of $f(z)$.
If $f(z)$ is $k$-times continuously differentiable, $f[z_0,\,z_1,\ldots,z_{k}]$
is a continuous function of its arguments. Moreover, if
$\Omega$ is a closed interval in the real axis, we have
\begin{equation}\label{diff1}
 f[z_0,\,z_1,\ldots,z_{k}] = \dfrac {f^{(k)}(\zeta)}{k!}\ \text{for some}\
 \zeta\in \Omega.
\end{equation}
However, no result of form (\ref{diff1}) holds for complex $z_i$. Nevertheless,
if $z_0,\,z_1,\ldots,z_{k}\in \Omega$, then
it holds \cite[p. 333]{book3} that
\begin{equation}\label{Cdiff}
 \abs{f[z_0,\,z_1,\ldots,z_{k}]} \leq \frac{\max_{z\in \Omega}\abs{f^{(k)}(z)}}{k!}.
\end{equation}
From the above it is direct to get
\begin{equation}
\phi_{k+1}(z)=f[z,\,z_0,\ldots,z_k]. \label{phifun}
\end{equation}
Next we establish a result on the expansion of the error $E_m(f)$.

\begin{theorem}\label{error expansion1}
Assume that $f(z)$ is analytic in the closed convex set $\Omega$ and on its
boundary, which contains the field of values $F(A)$ and the field of
values $F(K_{m+l})$,
and there exists a positive constant $C$ such that $\max_{z\in \Omega}
\abs{f^{(k)}(z)}\leq C$ for
all $k\geq 0$. Then the error $E_m(f)$ produced by the Krylov-like approximation
satisfies the expansion
\begin{equation}\label{expansion1}
    E_m(f)=f(A)v-W_{m+l}f(K_{m+l})\hat{b}=\sum_{k=1}^{\infty}k_{m+l}^T
    \phi_k(K_{m+l})\hat{b}\ q_{k-1}(A)w,
\end{equation}
where $q_0(z)=1,\ q_k(z)=(z-z_0)\cdots(z-z_{k-1}),\ k\geq 1$, and
$z_i\in \Omega$ for all $i\geq0,$
and $\hat{b}\in \mathbb{C}^{m+l}$ is any vector satisfying $W_{m+l}\hat{b}=v.$

In particular, if $z_0,z_1,\ldots,z_{N-1}$ are the $N$ exact eigenvalues of $A$
counting multiplicities, then the infinite series
{\rm (\ref{expansion1})} simplifies to a finite one
\begin{equation}
 E_m(f)=f(A)v-W_{m+l}f(K_{m+l})\hat{b}=\sum_{k=1}^{N}k_{m+l}^T
    \phi_k(K_{m+l})\hat{b}\ q_{k-1}(A)w. \label{finite}
\end{equation}
\end{theorem}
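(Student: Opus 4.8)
The plan is to introduce the auxiliary errors
\[
E_m^{(k)} := \phi_k(A)v - W_{m+l}\phi_k(K_{m+l})\hat{b},\qquad k\geq 0,
\]
so that $E_m^{(0)}=E_m(f)$, and to establish a one-step recurrence linking $E_m^{(k)}$ to $E_m^{(k+1)}$. Rewriting the defining recurrence (\ref{phi}) as $\phi_k(z)=\phi_k(z_k)+(z-z_k)\phi_{k+1}(z)$ and applying the functional calculus with $z$ replaced by $A$ and by $K_{m+l}$, the two scalar contributions $\phi_k(z_k)v$ and $\phi_k(z_k)W_{m+l}\hat{b}$ cancel because $W_{m+l}\hat{b}=v$. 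To treat the surviving term $W_{m+l}(K_{m+l}-z_kI)\phi_{k+1}(K_{m+l})\hat{b}$, I would use the Krylov-like decomposition (\ref{Krylov decomposition}) in the form $W_{m+l}(K_{m+l}-z_kI)=(A-z_kI)W_{m+l}-wk_{m+l}^T$. This produces the key recurrence
\[
E_m^{(k)}=(A-z_kI)\,E_m^{(k+1)}+\big(k_{m+l}^T\phi_{k+1}(K_{m+l})\hat{b}\big)\,w.
\]

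Next I would unfold this recurrence $n$ times. The scalar coefficient $k_{m+l}^T\phi_k(K_{m+l})\hat{b}$ emerging at the $k$-th stage is premultiplied by the accumulated factor $(A-z_0I)\cdots(A-z_{k-2}I)=q_{k-1}(A)$, so unfolding yields
\[
E_m(f)=q_n(A)\,E_m^{(n)}+\sum_{k=1}^{n}\big(k_{m+l}^T\phi_k(K_{m+l})\hat{b}\big)\,q_{k-1}(A)w .
\]
The infinite series (\ref{expansion1}) then follows once the remainder $q_n(A)E_m^{(n)}$ is shown to tend to zero as $n\to\infty$.

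Proving this vanishing is the crux. The mechanism is that $\phi_n$ and all its derivatives become uniformly small on $\Omega$: by (\ref{phifun}) we have $\phi_n(z)=f[z,z_0,\ldots,z_{n-1}]$, and differentiating a divided difference in its free node gives $\phi_n^{(i)}(z)=i!\,f[\,\underbrace{z,\ldots,z}_{i+1},z_0,\ldots,z_{n-1}\,]$, a divided difference of order $n+i$. The complex bound (\ref{Cdiff}) together with $\max_{z\in\Omega}\abs{f^{(k)}(z)}\leq C$ then gives $\sup_{z\in\Omega}\abs{\phi_n^{(i)}(z)}\leq i!\,C/(n+i)!\leq C/n!$. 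Inserting these into Theorem \ref{boundf}, applied via the Schur decompositions of $A$ and of $K_{m+l}$, bounds both $\norm{\phi_n(A)}$ and $\norm{\phi_n(K_{m+l})}$ by $C'/n!$ with $C'$ independent of $n$. Combining this with the crude estimate $\norm{q_n(A)}\leq M^n$, where $M=\norm{A}+\max_{z\in\Omega}\abs{z}$, and with submultiplicativity of the norm, shows $\norm{q_n(A)E_m^{(n)}}=O(M^n/n!)\to 0$, which establishes (\ref{expansion1}).

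Finally, for the finite expansion (\ref{finite}) I would take $n=N$ with $z_0,\ldots,z_{N-1}$ the eigenvalues of $A$ counted with algebraic multiplicity, so that $q_N(z)=\prod_{j=0}^{N-1}(z-z_j)$ is exactly the characteristic polynomial of $A$; by the Cayley--Hamilton theorem $q_N(A)=\mathbf{0}$, hence the remainder vanishes identically after $N$ steps and the truncated sum is exact. The main obstacle throughout is the remainder estimate: it relies on recognizing that the decisive object is $\phi_n$ together with its derivatives on $\Omega$, and on the factorial decay furnished by the divided-difference representation, which must outweigh the at-most-exponential growth of $\norm{q_n(A)}$.
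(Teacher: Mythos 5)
Your proposal follows essentially the same route as the paper's own proof: the auxiliary errors $E_m^{(k)}$ are exactly the paper's $s_m^{(j)}$, the one-step recurrence and its unfolding with remainder $q_n(A)E_m^{(n)}$ are identical, the remainder is killed by the same combination of the divided-difference bound (\ref{Cdiff}), the Schur-decomposition estimate of Theorem \ref{boundf}, and the exponential-versus-factorial comparison, and the finite case is handled by Cayley--Hamilton just as in the paper. If anything, your explicit bound $\sup_{z\in\Omega}\abs{\phi_n^{(i)}(z)}\leq i!\,C/(n+i)!\leq C/n!$ via differentiation of the divided difference in its free node supplies a detail that the paper's application of Theorem \ref{boundf} to $\phi_j$ leaves implicit.
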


\begin{proof}
Defining
\begin{equation}\label{defsm}
s_m^{(j)}=\phi_j(A)v-W_{m+l}\phi_j(K_{m+l})\hat{b},
\end{equation}
which is the error of the Krylov-like approximation to $\phi_j(A)v$,
and making use of
the relation $f(z)=(z-z_0)\phi_1(z)+f(z_0)$, we have
\begin{eqnarray}
  f(A)v &=& f(z_0)v+(A-z_0I)\phi_1(A)v \nonumber\\
        &=& f(z_0)v+(A-z_0I)(W_{m+l}\phi_1(K_{m+l})\hat{b}+s_m^{(1)}) \nonumber \\
        &=& f(z_0)v+\left(W_{m+l}(K_{m+l}-z_0I)+wk_{m+l}^T\right)\phi_1(K_{m+l})
        \hat{b}+(A-z_0I)s_m^{(1)} \nonumber \\
        &=& W_{m+l}\left(f(z_0)\hat{b}+(K_{m+l}-z_0I)\phi_1(K_{m+l})\hat{b}\right)
        \nonumber\\
        & &+k_{m+l}^T\phi_1(K_{m+l})\hat{b}\ w+(A-z_0I)s_m^{(1)} \nonumber\\
        &=& W_{m+l}f(K_{m+l})\hat{b}+k_{m+l}^T\phi_1(K_{m+l})\hat{b}\ w
        +(A-z_0I)s_m^{(1)}.
        \label{s1}
\end{eqnarray}
Proceeding in the same way, we obtain also
\begin{eqnarray*}
  \phi_1(A)v &=& \phi_1(z_1)v+(A-z_1I)\phi_2(A)v \nonumber\\
        &=& \phi_1(z_1)v+(A-z_1I)(W_{m+l}\phi_2(K_{m+l})\hat{b}+s_m^{(2)})
        \nonumber \\
        &=& \phi_1(z_1)v+\left(W_{m+l}(K_{m+l}-z_1I)+wk_{m+l}^T\right)
        \phi_2(K_{m+l})\hat{b}+(A-z_1I)s_m^{(2)} \nonumber \\
        &=& W_{m+l}\left(\phi_1(z_1)\hat{b}+(K_{m+l}-z_1I)\phi_2(K_{m+l})
        \hat{b}\right) \nonumber\\
        & &+k_{m+l}^T\phi_2(K_{m+l})\hat{b}\ w+(A-z_1I)s_m^{(2)} \nonumber\\
        &=& W_{m+l}\phi_1(K_{m+l})\hat{b}+k_{m+l}^T\phi_2(K_{m+l})\hat{b}\
        w+(A-z_1I)s_m^{(2)}.
\end{eqnarray*}
By definition (\ref{defsm}), this gives $s_m^{(1)}=k_{m+l}^T\phi_2(K_{m+l})\hat{b}\ w
+(A-z_1I)s_m^{(2)}$. Continue expanding $s_m^{(2)},\ s_m^{(3)},\ldots,s_m^{(j-1)}$
in the same manner. Then we get the following key recurrence formula
$$
s_m^{(j-1)}=k_{m+l}^T\phi_j(K_{m+l})\hat{b}\ w+(A-z_{j-1} I)s_m^{(j)},\
j=2,\ldots,\infty.
$$
Substituting $s_m^{(1)},\ s_m^{(2)},\ldots,s_m^{(j-1)}$ successively into
(\ref{s1}), we obtain
\begin{eqnarray*}
  E_m(f) &=& f(A)v-W_{m+l}f(K_{m+l})\hat{b}\nonumber \\
      &=& \sum_{k=1}^{j}k_{m+l}^T\phi_k(K_{m+l})\hat{b}\ q_{k-1}(A)w + q_j(A)s_m^{(j)},
\end{eqnarray*}
where $q_0(z)=1,\ q_k(z)=(z-z_0)\cdots(z-z_{k-1}),\ k\geq 1$.

Next we prove that $\|q_j(A)s_m^{(j)}\|$ converges to zero faster than $O(1/j)$ for
$j\ge M$ with $M$ a sufficiently large positive integer,
i.e., $\|q_j(A)s_m^{(j)}\|\leq o(1/j)$,
when $j$ is large enough. According to (\ref{Cdiff}), we get
\begin{equation}\label{phij}
   \mid \phi_j(z)\mid=\mid f[z,\,z_0,\,\ldots,\,z_{j-1}]\mid\leq
   \frac{\max_{\zeta\in \Omega} \abs{f^{(j)}(\zeta)}}{j!}
    \leq \frac{C}{j!}.
\end{equation}
Let $Q_1^*AQ_1=D_1+U_1$ and $Q_2^*K_{m+l}Q_2=D_2+U_2$ be the Schur decompositions
of $A$ and $K_{m+l}$, where $U_1$ and $U_2$ are strictly upper triangular.
By the assumptions on $\Omega$,
$F(A)$ and $F(K_{m+l})$,
it follows from (\ref{boundf1}) and (\ref{phij}) that
$$\norm{\phi_j(A)}\leq \frac{C}{j!}\sum_{i=0}^{N-1}\frac{\norm{U_1}_F^i}{i!},\
\norm{\phi_j(K_{m+l})}\leq \frac{C}{j!}\sum_{i=0}^{m+l-1}\frac{\norm{U_2}_F^i}{i!}.$$
Therefore, by the above and (\ref{defsm}), it holds that
\begin{equation}\label{sj}
  \norm{s_m^{(j)}}\leq \|\phi_j(A)\|\|v\|+\|W_{m+l}\|\|\phi_j(K_{m+l})\|\|\hat b\|
  \leq \frac{C_1}{j!},
\end{equation}
where
\begin{equation}\label{C1}
    C_1=C\left(\norm{v}\sum_{i=0}^{N-1}\frac{\norm{U_1}_F^i}{i!}
+\norm{W_{m+l}}\|\hat b\|\sum_{i=0}^{m+l-1}\frac{\norm{U_2}_F^i}{i!}\right).
\end{equation}

Since $z_0,z_1,\ldots,z_{j-1}\in \Omega$, we have $\|A-z_i I\|\leq \|A\|+
| z_i |\leq C_2$,
$i=0,1,\ldots,j-1$, where $C_2$ is a bounded constant.
Therefore, we have $\|q_j(A)\|\leq C_2^{j}$. So, by Stirling's inequality
(see, e.g., \cite[p. 257]{Striling})
$$
\sqrt{2\pi j}\left(\frac{j}{e}\right)^j<j!<\sqrt{2\pi j}
\left(\frac{j}{e}\right)^je^{\frac{1}{12j}},
$$
where $e$ is the base number of natural logarithm, we obtain
\begin{equation}
\|q_j(A)s_m^{(j)}\|\leq  C_1\frac{C_2^{j}}{j!}<\frac{C_1}{\sqrt{2\pi j}}
\left(\frac{C_2e}{j}\right)^{j}. \label{pjsm}
\end{equation}
So $\|q_j(A)s_m^{(j)}\|$ tends to zero swiftly as $j$ increases, faster than
$1/j^{3/2}$ when $j\ge M$, where $M$ is the positive integer making
the second factor of the right-hand side (\ref{pjsm}) smaller than $1/j$. We
remark that this factor itself converges to zero very fast as $j$ increases,
and essentially can be made smaller than $1/j^\alpha$ with an arbitrarily
given constant $\alpha \ge 1$ once $j$ is large enough. Therefore, we have
\begin{equation*}
    E_m(f)=f(A)v-W_{m+l}f(K_{m+l})\hat{b}=\sum_{k=1}^{\infty}k_{m+l}^T
    \phi_k(K_{m+l})\hat{b}\ q_{k-1}(A)w,
\end{equation*}
which is just (\ref{expansion1}).

If $z_0,z_1,\ldots,z_{N-1}$ are the $N$ exact eigenvalues of $A$ counting
multiplicities, then, by the Cayley-Hamilton theorem, $q_{N-1}(A)=0$,
from which it follows that $q_k(A)=0$ for $k\ge N$, independent of $z_N,
z_{N+1},\ldots$. Therefore, from (\ref{expansion1}) we get (\ref{finite})
without any requirement on the size of $| f^{(k)}(z) |$ over $\Omega$ for $k\geq 1$.
\end{proof}

From now on, to be specific for $f(z)=e^z$, we instead use the notation
$E_m(e^z,\tau)$ to denote
the error of the Arnoldi approximation to $e^{-\tau A}v$:
$$
E_m(e^z,\tau) = e^{-\tau A}v-\norm{v}V_me^{-\tau H_m}e_1.
$$
Suppose that the origin is in the set $\Omega$. Then by taking
all $z_k=0$ and $f(z)=e^{z}$, (\ref{expansion1}) reduces to the following
form, which simplifies to the error expansion due to
Saad \cite[Theorem 5.1]{arnoldi2} when $\tau=-1$.

\begin{cor}\label{error expansion2}
With the notation described previously, the error
produced by the Arnoldi approximation \rm(\ref{Arnoldi approximation}) to
$e^{-\tau A}v$ satisfies
\begin{equation}
  E_m(e^z,\tau) = -\tau \norm{v}h_{m+1,m}\sum_{k=1}^{\infty}e_m^T
  \phi_k(-\tau H_m)e_1(-\tau A)^{k-1}v_{m+1}. \label{saadbound}
\end{equation}
\end{cor}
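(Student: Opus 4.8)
The plan is to realize Corollary \ref{error expansion2} as the specialization of Theorem \ref{error expansion1} obtained by (i) absorbing the parameter $\tau$ into a rescaled matrix and (ii) choosing all interpolation nodes to be the origin. First I would set $B=-\tau A$. Since scaling by a nonzero scalar leaves the Krylov subspace unchanged, $\mathcal{K}_m(A,v)=\mathcal{K}_m(B,v)$, so the same orthonormal basis $V_m$ and vector $v_{m+1}$ serve for $B$; multiplying the Arnoldi decomposition (\ref{Arnoldi decomposition}) by $-\tau$ gives
$$BV_m=V_m(-\tau H_m)+(-\tau h_{m+1,m})v_{m+1}e_m^T,$$
which is an instance of the Krylov-like decomposition (\ref{Krylov decomposition}) with $l=0$, $W_m=V_m$, $K_m=-\tau H_m$, $w=v_{m+1}$ and $k_m=-\tau h_{m+1,m}e_m$. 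Because $V_m(\norm{v}e_1)=\norm{v}v_1=v$, the choice $\hat b=\norm{v}e_1$ is admissible, and the associated approximation $V_m e^{-\tau H_m}(\norm{v}e_1)=\norm{v}V_m e^{-\tau H_m}e_1$ is exactly the Arnoldi approximation (\ref{Arnoldi approximation}) to $e^{-\tau A}v$. Hence the error $E_m(f)$ of (\ref{error}) for $f(z)=e^z$ applied to $B$ coincides with $E_m(e^z,\tau)$.

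Next I would verify that Theorem \ref{error expansion1} applies to $f(z)=e^z$. I would fix a bounded closed convex set $\Omega$ containing the origin and the (bounded) field of values $F(B)$. Since in the Arnoldi case $H_m=V_m^*AV_m$ is a compression of $A$, one has $F(H_m)\subseteq F(A)$, whence $F(-\tau H_m)=-\tau F(H_m)\subseteq-\tau F(A)=F(B)\subseteq\Omega$, so both fields of values required by the theorem lie in $\Omega$. As $f^{(k)}(z)=e^z$ for every $k$, the uniform bound $\max_{z\in\Omega}\abs{f^{(k)}(z)}\le e^{\max_{z\in\Omega}\RE z}=:C$ holds with a single constant $C$ independent of $k$, which is precisely the hypothesis guaranteeing convergence of the infinite series in (\ref{expansion1}).

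Finally I would take all nodes $z_k=0$, which is permissible since the origin lies in $\Omega$. Then $q_{k-1}(z)=(z-z_0)\cdots(z-z_{k-2})=z^{k-1}$, so $q_{k-1}(B)=(-\tau A)^{k-1}$, while the functions $\phi_k$ generated by (\ref{phi}) are exactly those built from $e^z$ at these nodes. Substituting $W_m=V_m$, $K_m=-\tau H_m$, $k_m=-\tau h_{m+1,m}e_m$, $w=v_{m+1}$ and $\hat b=\norm{v}e_1$ into (\ref{expansion1}) yields
$$E_m(e^z,\tau)=\sum_{k=1}^{\infty}(-\tau h_{m+1,m})e_m^T\phi_k(-\tau H_m)(\norm{v}e_1)\,(-\tau A)^{k-1}v_{m+1},$$
and pulling the scalars $-\tau\norm{v}h_{m+1,m}$ out of the sum gives precisely (\ref{saadbound}).

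The computation is essentially bookkeeping, so I do not expect a genuine obstacle; the only points demanding care are tracking the powers of $-\tau$ correctly and confirming that the derivative bound for $e^z$ is uniform in $k$, so that the full series form of Theorem \ref{error expansion1}, rather than a mere finite truncation, is legitimately invoked. As a consistency check on the sign and scaling conventions, I would note the alternative route of applying the theorem directly to $f(z)=e^{-\tau z}$ with $A$ and $H_m$ unscaled: its recurrence functions $\psi_k$ satisfy $\psi_k(z)=(-\tau)^k\phi_k(-\tau z)$, and combining this with $q_{k-1}(A)=A^{k-1}$ reproduces the same grouping of factors, confirming (\ref{saadbound}).
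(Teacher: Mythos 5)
Your proposal is correct and follows essentially the same route as the paper, which obtains the corollary by specializing Theorem \ref{error expansion1} to $f(z)=e^z$ with all nodes $z_k=0$; you merely make explicit the bookkeeping (rescaling to $B=-\tau A$, identifying $W_m=V_m$, $K_m=-\tau H_m$, $k_m=-\tau h_{m+1,m}e_m$, $\hat b=\norm{v}e_1$) that the paper leaves implicit. One minor caveat: your ``consistency check'' via $f(z)=e^{-\tau z}$ does not literally satisfy the hypothesis $\max_{z\in\Omega}\abs{f^{(k)}(z)}\leq C$ uniformly in $k$ when $\abs{\tau}>1$, since $f^{(k)}(z)=(-\tau)^k e^{-\tau z}$, so that alternative route is only heuristic, but your primary argument with $f(z)=e^z$ applied to $B$ is sound.
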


\begin{rem}\label{rem3.1}
In comparison with Corollary~\ref{error expansion2}, (\ref{expansion1})
has two distinctive features. First, the theorem holds for more
general analytic functions other than only the exponential function. Second,
it holds for the Krylov-like decomposition, a generalization
of the Arnoldi decomposition. As a consequence, the Arnoldi approximation
\cite{arnoldi1,arnoldi2}, the restarted Krylov subspace approach
\cite{restarted1,restarted8} and the deflated restarting approach
\cite{deflaterestarted1} for approximating $f(A)v$ all have the error of
form (\ref{expansion1}).
\end{rem}

\begin{rem}\label{rem3.2}
Apart from $e^z$, (\ref{expansion1}) applies to the trigonometric functions
$\cos(z)$ and $\sin(z)$ as well. It is seen from the proof
that $\|q_k(A)s_m^{(k)}\|$ decays swiftly, faster than $1/k^{3/2}$ for
$k\ge M$ with $M$ a suitable positive integer. Let us
look into the size of $M$. For brevity, we suppose all $z_i\in F(A)$,
so that $| z_i |\leq \|A\|$ and we can take the constant $C_2=2\|A\|$.
Therefore, from (\ref{pjsm}), such $M$
is the minimal $k$ making
$$
\left(\frac{k}{2\|A\|e}\right)^k>k.
$$
From this, we get
$$
\log k>\log (2\|A\|e)+\frac{\log k}{k},
$$
where $\log(\cdot)$ is the logarithm of base number 10.
Note that $0\leq \frac{\log k}{k}<1$ for $k\ge 1$. Therefore, we
relax the above inequality problem to
$$
\log k\geq \log (2\|A\|e)+1=\log (20\|A\|e),
$$
whose minimal $k=M=\left\lceil 20\|A\|e \right \rceil$, where
$\lceil \cdot \rceil$ is the ceil function.
From this, the sum of norms of the terms from the $M$th
to infinity is no more than the order of $\sum_{M}^{\infty}k^{-3/2}\leq
\int_{M-1}^{\infty}x^{-3/2}dx=\frac{2}{\sqrt{M-1}}$, which is smaller
than the sum of norms of the first $M-1$ terms.
Moreover, (\ref{pjsm}) indicates that each of the first $M-1$ terms
is of $O(1/\sqrt{j}),\ j=1,2,\ldots,M-1$.
Noting this remarkable decaying tendency, we deduce that
the norm of the first term in (\ref{expansion1}) is generally of the same order
as $\|E_m(f)\|$ for all sufficiently smooth functions.
\end{rem}

\section{Upper bounds for $\norm{E_m(e^z,\tau)}$} \label{sec:4}

Here and hereafter let $\beta=\norm{v}$. In this section, we first establish
some new upper bounds for the norm of $E_m(e^z,\tau) = e^{-\tau A}v-\beta
V_me^{-\tau H_m}e_1$. Then we prove theoretically why the first term
in (\ref{expansion1}) generally  measures the error reliably. This justifies
the observation that the first term is numerically ``surprisingly sharp"
in \cite{arnoldi2}, and for the first time provides a solid theoretical support
on the rationale of the error estimates advanced in \cite{arnoldi2}.

Let $\mu_2[A]$ denote the 2-logarithmic norm of $A$, which is defined by
\begin{equation*}\label{lognorm}
  \mu_2[A]=\lim_{h\rightarrow0+}\frac{\norm{I+hA}-1}{h}.
\end{equation*}
The logarithmic norm has plenty of properties; see \cite[p. 31]{book1} and
\cite{other3}. Here we list some of them that will be used later.
\begin{prop}\label{prop4}
Let $\mu_2[A]$ denote the 2-logarithmic norm of $A$. Then we have
\begin{enumerate}[1)]
  \item $-\norm{A}\leq\mu_2[A]\leq\norm{A}$;
  \item $\mu_2[A]=\lambda_{\max}\left(\dfrac{A+A^*}{2}\right)$;
  \item $\mu_2[tA]=t\mu_2[A],\ \text{for all}\ t\geq0$;
  \item $\norm{e^{tA}}\leq e^{t\mu_2[A]},\ \text{for all}\ t\geq 0$.
\end{enumerate}
\end{prop}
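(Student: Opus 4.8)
The plan is to prove part 2) first, since the spectral identity $\mu_2[A]=\lambda_{\max}(H)$ with $H=\frac{A+A^*}{2}$ is the crux, and the other three parts follow quickly from it. Writing $M(h)=(I+hA)^*(I+hA)=I+2hH+h^2A^*A$, which is Hermitian, I would use $\norm{I+hA}=\sqrt{\lambda_{\max}(M(h))}$ together with the Rayleigh-quotient identity
\[
\lambda_{\max}(M(h))=\max_{\norm{x}=1}x^*M(h)x=1+h\max_{\norm{x}=1}\bigl(2x^*Hx+hx^*A^*Ax\bigr),\quad h>0.
\]
Denoting the inner maximum by $g(h)$, the contraction property of $\max$ and the bound $\abs{x^*A^*Ax}\le\norm{A}^2$ on the unit sphere give $\abs{g(h)-g(0)}\le h\norm{A}^2$, so $g(h)\to g(0)=2\lambda_{\max}(H)$ as $h\to0+$. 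Since $\lambda_{\max}(M(h))-1=hg(h)$, rationalizing via $\sqrt{a}-1=(a-1)/(\sqrt{a}+1)$ yields
\[
\frac{\norm{I+hA}-1}{h}=\frac{g(h)}{\sqrt{\lambda_{\max}(M(h))}+1}\;\longrightarrow\;\frac{2\lambda_{\max}(H)}{2}=\lambda_{\max}(H),
\]
which is exactly part 2). This route avoids any appeal to differentiability of the square root or of $\lambda_{\max}$.

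Parts 1) and 3) are then immediate. For part 1), since $H$ is Hermitian its eigenvalues lie in $[-\norm{H},\norm{H}]$ and $\norm{H}\le\frac12(\norm{A}+\norm{A^*})=\norm{A}$, so $-\norm{A}\le\lambda_{\max}(H)\le\norm{A}$; alternatively this follows straight from the definition via $1-h\norm{A}\le\norm{I+hA}\le1+h\norm{A}$. For part 3), $\frac{(tA)+(tA)^*}{2}=tH$ for $t\ge0$ and $\lambda_{\max}(tH)=t\lambda_{\max}(H)$, giving $\mu_2[tA]=t\mu_2[A]$ (the case $t=0$ being trivial).

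For part 4) I would pass to the vector level and run a Gronwall/comparison argument. Fix $x$ with $\norm{x}=1$ and set $y(t)=e^{tA}x$, so that $y'(t)=Ay(t)$ and $u(t)=\norm{y(t)}^2=y(t)^*y(t)$ is differentiable with
\[
u'(t)=y^*(A^*+A)y\le\lambda_{\max}(A^*+A)\,\norm{y}^2=2\mu_2[A]\,u(t),
\]
the last equality invoking part 2). Hence $\frac{d}{dt}\bigl(u(t)e^{-2\mu_2[A]t}\bigr)\le0$, so $u(t)e^{-2\mu_2[A]t}$ is nonincreasing and bounded above by $u(0)=1$; taking square roots gives $\norm{e^{tA}x}\le e^{t\mu_2[A]}$, and maximizing over the unit sphere gives part 4).

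I expect the only genuinely delicate point to be part 2), namely justifying the passage to the limit in $\frac{\norm{I+hA}-1}{h}$ without differentiating $\lambda_{\max}(M(h))$ at a possibly multiple or non-simple eigenvalue. The Rayleigh-quotient identity together with the uniform-in-$x$ estimate $\abs{g(h)-g(0)}\le h\norm{A}^2$ sidesteps this perturbation-theoretic issue entirely, after which parts 1), 3) and 4) are routine.
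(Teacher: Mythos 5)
Your proof is correct. Note, however, that the paper itself does not prove Proposition 4.1 at all: it states these facts as standard properties of the logarithmic norm and refers the reader to Dekker--Verwer \cite{book1} and Str\"{o}m \cite{other3}, so there is no in-paper argument to compare against. What you supply is a complete, self-contained derivation, and it is sound at every step. The one genuinely delicate point, the limit defining $\mu_2[A]$ in part 2), is handled cleanly: writing $\norm{I+hA}=\sqrt{\lambda_{\max}(M(h))}$ with $M(h)=I+2hH+h^2A^*A$, using $\abs{\sup_x f(x)-\sup_x g(x)}\leq\sup_x\abs{f(x)-g(x)}$ to get $g(h)\to 2\lambda_{\max}(H)$, and rationalizing the square root avoids any appeal to differentiability of $\lambda_{\max}$ at a multiple eigenvalue, which is exactly the perturbation-theoretic trap one could fall into here. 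Parts 1) and 3) follow correctly from 2) (and your alternative derivation of 1) directly from the definition is also valid), and the Gronwall-type argument for 4), differentiating $u(t)=\norm{e^{tA}x}^2$ and bounding $y^*(A+A^*)y$ by $2\mu_2[A]\,u(t)$, is the standard and correct way to obtain $\norm{e^{tA}}\leq e^{t\mu_2[A]}$; it matches the treatment in the cited references. No gaps.
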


\subsection{An upper bound for $\norm{E_m(e^z,\tau)}$} \label{subsec:4.1}

Next we establish an upper bound for the error norm for a general $A$ and refine it
when $A$ is Hermitian.

\begin{theorem}\label{thm4}
With the notation described previously, it holds that
\begin{equation}\label{result1}
\norm{E_m(e^z,\tau)}\leq \tau\beta h_{m+1,m}\max_{0\leq t\leq\tau}
\abs{e_m^Te^{-tH_m}e_1}\dfrac{e^{\tau\mu_2}-1}{\tau\mu_2},
\end{equation}
where $\mu_2=\mu_2[-A]$. \footnote{When revising the paper, we found that,
essentially, (\ref{result1}) is exactly the same as Lemma 4.1 of \cite{botchev},
but the proofs are different and our result is more explicit.
We thank the referee who asked us to compare these
two seemingly different bounds.}
\end{theorem}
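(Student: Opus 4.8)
The plan is to treat the error as a function of the time parameter and to exploit the fact that both $e^{-tA}v$ and its Arnoldi approximation solve closely related linear ODEs, so that their difference admits a variation-of-constants (Duhamel) representation. Concretely, I would set $\epsilon(t)=e^{-tA}v-\beta V_me^{-tH_m}e_1$, so that $E_m(e^z,\tau)=\epsilon(\tau)$ and $\epsilon(0)=v-\beta V_me_1=v-\beta v_1=\mathbf{0}$ because $\beta v_1=v$. Differentiating in $t$ and substituting the Arnoldi decomposition (\ref{Arnoldi decomposition}) in the form $V_mH_m=AV_m-h_{m+1,m}v_{m+1}e_m^T$ yields
\begin{equation*}
\epsilon'(t)=-A\epsilon(t)-\beta h_{m+1,m}v_{m+1}\left(e_m^Te^{-tH_m}e_1\right),
\end{equation*}
where the scalar $e_m^Te^{-tH_m}e_1$ carries all the $t$-dependence of the forcing term apart from the fixed direction $v_{m+1}$.

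Next I would solve this inhomogeneous linear system. Using the integrating factor $e^{tA}$ and the homogeneous initial condition $\epsilon(0)=\mathbf{0}$, integration from $0$ to $\tau$ gives the closed-form representation
\begin{equation*}
E_m(e^z,\tau)=-\beta h_{m+1,m}\int_0^\tau \left(e_m^Te^{-sH_m}e_1\right)e^{-(\tau-s)A}v_{m+1}\,ds.
\end{equation*}
Taking norms, pulling out the scalar and bounding it by its maximum over $[0,\tau]$, I would then use $\norm{v_{m+1}}=1$ together with part 4) of Proposition~\ref{prop4}, namely $\norm{e^{-(\tau-s)A}}\leq e^{(\tau-s)\mu_2}$ with $\mu_2=\mu_2[-A]$ (legitimate since $\tau-s\geq0$), to obtain
\begin{equation*}
\norm{E_m(e^z,\tau)}\leq \beta h_{m+1,m}\max_{0\leq s\leq\tau}\abs{e_m^Te^{-sH_m}e_1}\int_0^\tau e^{(\tau-s)\mu_2}\,ds.
\end{equation*}
The remaining integral is elementary: the substitution $r=\tau-s$ gives $\int_0^\tau e^{r\mu_2}\,dr=(e^{\tau\mu_2}-1)/\mu_2$. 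Since $(e^{\tau\mu_2}-1)/\mu_2=\tau\,(e^{\tau\mu_2}-1)/(\tau\mu_2)$, this is exactly the right-hand side of (\ref{result1}), and the bound follows.

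The computation is short and I do not anticipate a serious obstacle; the one point that must be handled with care is the use of the logarithmic norm, which is also the step that makes the estimate sharp. Replacing the crude bound $\norm{e^{-(\tau-s)A}}\leq e^{(\tau-s)\norm{A}}$ by $e^{(\tau-s)\mu_2[-A]}$ is what tightens the result, and it is valid only because $\tau-s\geq0$ over the whole interval of integration, so I implicitly take $\tau>0$, as is natural for a time step; in the degenerate case $\mu_2=0$ the factor $(e^{\tau\mu_2}-1)/(\tau\mu_2)$ is read as its limit $1$. Everything else—the ODE derivation, the Duhamel step, and the final quadrature—is routine, and one sees that the $\tau$ displayed in the statement cancels against the $\tau$ in the denominator, confirming that (\ref{result1}) is really a bound of the form ``$\beta h_{m+1,m}$ times a maximal nodal value times a scalar quadrature weight.''
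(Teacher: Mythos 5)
Your proposal is correct and follows essentially the same route as the paper: both derive the inhomogeneous ODE for the error via the Arnoldi decomposition, apply Duhamel's formula with the zero initial condition, and bound the resulting integral using the logarithmic norm $\mu_2[-A]$. Your write-up even makes the final norm estimate slightly more explicit than the paper's, which compresses it into a single "taking norms" step.
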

\begin{proof}
Let $w(t)=e^{-tA}v$ and $w_m(t)=\beta V_me^{-tH_m}e_1$ be the $m$th Arnoldi
approximation to $w(t)$. Then $w(t)$ and $w_m(t)$ satisfy
\begin{equation}\label{w}
w'(t)=-Aw(t),\ w(0)=v
\end{equation}
and
\begin{equation*}
w_m'(t)=-\beta V_mH_me^{-tH_m}e_1,\ w_m(0)=v,
\end{equation*}
respectively. Using (\ref{Arnoldi decomposition}), we have
\begin{eqnarray}\label{w_m}
w_m'(t) & = & -\beta(AV_m-h_{m+1,m}v_{m+1}e_m^T)e^{-tH_m}e_1 \nonumber\\
& = & -\beta AV_me^{-tH_m}e_1+\beta h_{m+1,m}(e_m^Te^{-tH_m}e_1)v_{m+1} \nonumber\\
& = & -Aw_m(t)+\beta h_{m+1,m}(e_m^Te^{-tH_m}e_1)v_{m+1}.
\end{eqnarray}
Define the error $E_m(e^z,t)=w(t)-w_m(t).$ Then by (\ref{w}) and (\ref{w_m}),
$E_m(e^z,t)$ satisfies
\begin{equation*}
E_m'(e^z,t)=-AE_m(e^z,t)-g(t),\ E_m(e^z,0)=0,
\end{equation*}
where $g(t)=\beta h_{m+1,m}(e_m^Te^{-tH_m}e_1)v_{m+1}$.

By solving the above ODE, we get
\begin{eqnarray}\label{err1}
E_m(e^z,\tau) & = & -\int_0^\tau e^{(t-\tau)A}g(t)dt\nonumber\\
& = & -\beta h_{m+1,m}\int_0^\tau
(e_m^Te^{-tH_m}e_1)e^{(t-\tau)A}v_{m+1}dt.\nonumber
\end{eqnarray}
Taking the norms on the two sides gives
\begin{equation*}
\norm{E_m(e^z,\tau)}\leq \tau\beta h_{m+1,m}\max_{0\leq t\leq\tau}
\abs{e_m^Te^{-tH_m}e_1}\dfrac{e^{\tau\mu_2}-1}{\tau\mu_2},
\end{equation*}
where $\mu_2=\mu_2[-A]$.
\end{proof}


In the case that $A$ is Hermitian, the Arnoldi decomposition reduces to the
Lanczos decomposition
\begin{equation}\label{lanczos}
AV_m=V_mT_m+\eta_{m+1}v_{m+1}e_m^T,
\end{equation}
where $T_m$ is Hermitian tridiagonal. The $m$th Lanczos approximation to
$e^{-\tau A}v$ is $\beta V_me^{-\tau T_m}e_1$.
For $A$ Hermitian, since $\mu_2=\mu_2[-A]=\lambda_{\max}\left(-\frac{A+A^*}{2}\right)
=-\lambda_{\min}(A)$,
it follows from (\ref{result1}) that
\begin{equation}
    \norm{E_m(e^z,\tau)}\leq \tau\beta \eta_{m+1}\max_{0\leq t\leq\tau}
    \abs{e_m^Te^{-tT_m}e_1}\dfrac{e^{-\tau\lambda_{\min}(A)}-1}{-\tau
    \lambda_{\min}(A)}, \label{lanczoserror}
\end{equation}
which coincides with Theorem 3.1 in \cite{error8} by setting
$\alpha=0$ or $\tau$ there. We remark that, by the Taylor
expansion, it is easily justified that the factor
$$
\dfrac{e^{-\tau\lambda_{\min}(A)}-1}{-\tau\lambda_{\min}(A)}\geq 1
$$
if $A$ is semi-negative definite and
$$
\dfrac{e^{-\tau\lambda_{\min}(A)}-1}{-\tau\lambda_{\min}(A)}\leq 1
$$
if $A$ is semi-positive definite.

From the above theorem, for $A$ Hermitian we can derive a
more compact form, which is practically more convenient to use.

\begin{theorem}\label{thm1}
Assume that $spec(A)$ is contained in the interval $\Lambda\equiv [a,\,b]$ and the
Lanczos process (\ref{lanczos}) can be run $m$ steps without breakdown. Then
\begin{equation}\label{result3}
    \norm{E_m(e^z,\tau)}\leq \gamma_1\tau\beta\eta_{m+1}\abs{e_m^Te^{-\tau T_m}e_1},
\end{equation}
where
\begin{gather}\label{gamma1}
  \gamma_1=\left\{
                   \begin{array}{ll}
                     e^{\tau(b-a)}\dfrac{e^{-\tau\lambda_{\min}(A)}-1}{-\tau
                     \lambda_{\min}(A)}, & \lambda_{\min}(A)\neq0 \\
                     e^{\tau(b-a)}, &  \lambda_{\min}(A)=0. \\
                   \end{array}
                   \right.
\end{gather}
\end{theorem}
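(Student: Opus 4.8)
The plan is to obtain (\ref{result3}) directly from the Hermitian bound (\ref{lanczoserror}), which is already in hand as the specialization of Theorem~\ref{thm4}. Since the factor $\frac{e^{-\tau\lambda_{\min}(A)}-1}{-\tau\lambda_{\min}(A)}$ occurs both in (\ref{lanczoserror}) and, through $\gamma_1$, in (\ref{result3}), the whole statement reduces to the scalar inequality
\[
\max_{0\le t\le\tau}\abs{e_m^Te^{-tT_m}e_1}\le e^{\tau(b-a)}\,\abs{e_m^Te^{-\tau T_m}e_1}.
\]
Granting this, multiplying through by the (positive) quantity $\tau\beta\eta_{m+1}\frac{e^{-\tau\lambda_{\min}(A)}-1}{-\tau\lambda_{\min}(A)}$ turns (\ref{lanczoserror}) into (\ref{result3}) with $\gamma_1$ as stated when $\lambda_{\min}(A)\neq0$. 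The case $\lambda_{\min}(A)=0$ needs no separate work: the factor $\frac{e^{-\tau\lambda_{\min}(A)}-1}{-\tau\lambda_{\min}(A)}$ tends to $1$ as $\lambda_{\min}(A)\to0$, so (\ref{lanczoserror}) reads $\norm{E_m(e^z,\tau)}\le\tau\beta\eta_{m+1}\max_{0\le t\le\tau}\abs{e_m^Te^{-tT_m}e_1}$, and the same scalar inequality yields $\gamma_1=e^{\tau(b-a)}$.

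To prove the scalar inequality I would first record that $T_m=V_m^*AV_m$ is a compression of the Hermitian matrix $A$, so its field of values, and hence its spectrum, lies in $F(A)\subseteq\Lambda=[a,b]$; thus the Ritz values $\theta_1,\dots,\theta_m$ of $T_m$ all belong to $[a,b]$, and this localization is what should produce the exponent $b-a$. To turn the off-diagonal entry $e_m^Te^{-tT_m}e_1$ into a tractable object I would use that $T_m$ is unreduced tridiagonal, so by the divided-difference identity for tridiagonal matrices
\[
e_m^Te^{-tT_m}e_1=\Big(\prod_{i=2}^{m}\eta_i\Big)\,g_t[\theta_1,\dots,\theta_m],\qquad g_t(z)=e^{-tz},
\]
and then invoke the Hermite--Genocchi formula $g_t[\theta_1,\dots,\theta_m]=(-t)^{m-1}\int_{\Delta}e^{-t\sum_i\sigma_i\theta_i}\,d\sigma$, where $\Delta$ is the standard simplex and $\sum_i\sigma_i\theta_i$ is a convex combination of the $\theta_i$, hence lies in $[a,b]$. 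Since the integrand is positive, this simultaneously shows that $e_m^Te^{-tT_m}e_1$ keeps a constant sign on $(0,\tau]$ and lets the common factor $\prod_i\eta_i$ cancel, so that $\abs{e_m^Te^{-tT_m}e_1}/\abs{e_m^Te^{-\tau T_m}e_1}$ becomes $(t/\tau)^{m-1}\le1$ times a genuine quotient of positive simplex integrals.

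The core estimate is then to bound that quotient of integrals by $e^{\tau(b-a)}$. I would factor $e^{-t\sum_i\sigma_i\theta_i}=e^{(\tau-t)\sum_i\sigma_i\theta_i}e^{-\tau\sum_i\sigma_i\theta_i}$ and, after shifting the nodes by $a$ so that each $\sum_i\sigma_i\theta_i-a\in[0,b-a]$, try to pull the positive multiplier $e^{(\tau-t)(\sum_i\sigma_i\theta_i-a)}\le e^{(\tau-t)(b-a)}\le e^{\tau(b-a)}$ out of the numerator and cancel it against the denominator, leaving the required inequality.

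The main obstacle is exactly this last step: extracting the spectral diameter $b-a$ rather than the cruder endpoint $b=\lambda_{\max}$. The shift by $a$ that exposes the diameter also introduces a compensating factor $e^{(\tau-t)a}$, and controlling the two against each other so that only $b-a$ survives is delicate---one cannot simply write $\abs{e_m^Te^{-tT_m}e_1}\le\norm{e^{(\tau-t)T_m}}\,\abs{e_m^Te^{-\tau T_m}e_1}$, both because $e_m^Te^{-tT_m}e_1$ is an off-diagonal entry that is not sign-definite a priori and because that operator-norm route only returns $\norm{e^{(\tau-t)T_m}}=e^{(\tau-t)b}$. The difficulty is sharpened by the fact that the maximum over $t\in[0,\tau]$ is typically attained at an interior $t^\star$ where $\abs{e_m^Te^{-tT_m}e_1}$ can be far larger than its endpoint value, so the entire weight of the argument rests on performing the shift inside the Hermite--Genocchi integral carefully enough that the endpoint value $\abs{e_m^Te^{-\tau T_m}e_1}$, and not merely $\lambda_{\max}$, governs the comparison.
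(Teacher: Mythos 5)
Your overall strategy is sound and, at its core, coincides with the paper's. The reduction through (\ref{lanczoserror}) is legitimate: if
\[
\max_{0\le t\le\tau}\abs{e_m^Te^{-tT_m}e_1}\le e^{\tau(b-a)}\abs{e_m^Te^{-\tau T_m}e_1},
\]
then substituting this into (\ref{lanczoserror}) gives (\ref{result3}) with $\gamma_1$ exactly as in (\ref{gamma1}), the case $\lambda_{\min}(A)=0$ being the continuous limit as you say. Moreover, your divided-difference identity is precisely the paper's (\ref{err-t}): the paper expands $e^{-tT_m}=\sum_{i=0}^{m-1}a_i(t)(-tT_m)^i$ with leading coefficient $a_{m-1}(t)=f[-t\lambda_1,\ldots,-t\lambda_m]$ and uses the tridiagonal structure to kill all powers below the top one, which is your $e_m^Te^{-tT_m}e_1=\bigl(\prod_{i=2}^{m}\eta_i\bigr)g_t[\theta_1,\dots,\theta_m]$ after rescaling the nodes by $-t$. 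The only structural difference is that the paper keeps the factor $(t/\tau)^{m-1}$ inside the error integral (\ref{err3}) and discards it at the very end, while you discard it when passing to the maximum over $t$; both yield the same constant.

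The genuine gap is that you stop at the one estimate that actually has to be proved, the bound $\abs{a_{m-1}(t)/a_{m-1}(\tau)}\le e^{\tau(b-a)}$, and declare it a delicate unresolved obstacle. It is the paper's one-line inequality (\ref{inequality2}), obtained from the real mean-value form (\ref{diff1}): $a_{m-1}(t)=e^{\zeta_1}/(m-1)!$ for some $\zeta_1\in[-tb,-ta]$ and $a_{m-1}(\tau)=e^{\zeta_2}/(m-1)!$ for some $\zeta_2\in[-\tau b,-\tau a]$, whence the ratio is at most $e^{-ta}/e^{-\tau b}\le e^{-\tau a}/e^{-\tau b}=e^{\tau(b-a)}$. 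In your Hermite--Genocchi language it is even more direct: numerator and denominator are integrals of positive integrands against the same measure on the simplex, so their quotient is at most $\sup_{s\in[a,b]}e^{(\tau-t)s}=e^{(\tau-t)b}\le e^{\tau b}\le e^{\tau(b-a)}$; no shifting of nodes and no cancellation of a compensating $e^{(\tau-t)a}$ is needed. One caveat, which partly vindicates your suspicion that the diameter $b-a$ is hard to extract: the last step above (equivalently $e^{-ta}\le e^{-\tau a}$) uses $a\le 0$, and for $a>0$ your scalar inequality can genuinely fail (already for $m=1$ with $a=b=\theta_1>0$ the left-hand side equals $1$ at $t=0$ while the right-hand side is $e^{-\tau\theta_1}<1$). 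But the paper's own (\ref{inequality2}) makes the same silent monotonicity assumption $\max_{\zeta\in-t\Lambda}\abs{f^{(m-1)}(\zeta)}\le\max_{\zeta\in-\tau\Lambda}\abs{f^{(m-1)}(\zeta)}$, so closing your argument this way leaves you in exactly the position of the published proof, no worse and no better.
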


\begin{proof}
As before, the error $E_m(e^z,\tau)$ satisfies
\begin{equation}\label{err2}
E_m(e^z,\tau)=-\beta \eta_{m+1}\int_0^\tau
(e_m^Te^{-tT_m}e_1)e^{(t-\tau)A}v_{m+1}dt.
\end{equation}
Let $\lambda_1,\ldots,\lambda_m$ be the $m$ eigenvalues of $T_m$ and $f(z)=e^z$.
According to Propositions \ref{prop1}--\ref{prop2}, it is easy to verify
that for all $t\geq0$,
$$e^{-tT_m}\ =\ \sum_{i=0}^{m-1}a_i(t)(-tT_m)^i,$$
where $a_{m-1}(t)=f[-t\lambda_1,\ldots, -t\lambda_m],\ t\geq 0.$

From the tridiagonal structure of $T_m$, we have
\begin{equation}\label{err-t}
e_m^Te^{-tT_m}e_1=(-t)^{m-1}a_{m-1}(t)e_m^TT_m^{m-1}e_1.
\end{equation}
Combining (\ref{err2}) and (\ref{err-t}), we get
\begin{equation}\label{err3}
E_m(e^z,\tau)\ =\ -\beta \eta_{m+1}e_m^Te^{-\tau T_m}e_1\int_{0}^{\tau}%
\left(\frac{t}{\tau}\right)^{m-1}\frac{a_{m-1}(t)}{a_{m-1}(\tau)}e^{(t-\tau)A}v_{m+1}dt.
\end{equation}
Denote $-t\Lambda\equiv[-tb,\,-ta],\ 0\leq t\leq\tau.$ (\ref{diff1}) shows that
there exist $\zeta_1\in -t\Lambda$ and $\zeta_2\in -\tau \Lambda$, such that
\begin{eqnarray}\label{inequality2}
  \abs{\frac{a_{m-1}(t)}{a_{m-1}(\tau)}} &=&
  \abs{\frac{f^{(m-1)}(\zeta_1)}{f^{(m-1)}(\zeta_2)}}
    \leq\frac{\max_{\zeta\in -t\Lambda}\abs{f^{(m-1)}(\zeta)}}
    {\min_{\zeta\in -\tau \Lambda}\abs{f^{(m-1)}(\zeta)}}\nonumber \\
    &\leq & \frac{\max_{\zeta\in -\tau \Lambda}\abs{f^{(m-1)}(\zeta)}}
    {\min_{\zeta\in -\tau \Lambda}\abs{f^{(m-1)}(\zeta)}}
    =e^{\tau(b-a)}.
\end{eqnarray}
From the above and (\ref{err3}), we get
\begin{eqnarray*}
\norm{E_m(e^z,\tau)} & \leq &\beta\eta_{m+1}\abs{e_m^Te^{-\tau T_m}e_1}\int_{0}^{\tau}%
\left(\frac{t}{\tau}\right)^{m-1}e^{\tau(b-a)}\norm{e^{(t-\tau)A}}dt\\
& \leq &\beta\eta_{m+1}\abs{e_m^Te^{-\tau T_m}e_1}e^{\tau(b-a)}\int_{0}^{\tau}
\left(\frac{t}{\tau}\right)^{m-1}e^{(\tau-t)\mu_2}dt\\
& \leq &\gamma_1\tau\beta\eta_{m+1}\abs{e_m^Te^{-\tau T_m}e_1},
\end{eqnarray*}
where $\gamma_1$ is defined as (\ref{gamma1}) and $\mu_2=\mu_2[-A]$.
\end{proof}

Consider the linear system $Ax=v$. It is known \cite[p. 159-160]{book6} that
the Lanczos approximation $x_m$ to $x=A^{-1}v$ is given by $x_m=\beta V_mT_m^{-1}e_1$
and the a posteriori residual $r_m=v-Ax_m$ satisfies
$$
r_m=\beta\eta_{m+1}(e_m^TT_m^{-1}e_1)v_{m+1},
$$
whose norm is $\|r_m\|=\beta\eta_{m+1}\abs{e_m^TT_m^{-1}e_1}$.
The the error $e_m=x-x_m$ is closely related to $r_m$ by
$e_m=A^{-1}r_m$. Therefore, we have
\begin{equation}\label{EAx}
    \norm{e_m}=\norm{A^{-1}r_m}\leq \|A^{-1}\|\|r_m\|=
    \tilde{\gamma}_1\beta\eta_{m+1}\abs{e_m^TT_m^{-1}e_1},
\end{equation}
where $\tilde{\gamma}_1=\norm{A^{-1}}$.

In the same spirit, Theorem \ref{thm1} gives a similar result for the Lanczos
approximation to $f(A)v$, where the left-hand side of
the error norm (\ref{result3}) is uncomputable in practice, while its
right-hand side excluding the factor $\gamma_1$ is computable
and can be interpreted as an a posteriori
error. We can write (\ref{result3}) and (\ref{EAx}) in a unified form
\begin{equation*}
    \norm{f(A)v-\beta V_mf(T_m)e_1}\leq\gamma\beta\eta_{m+1}\abs{e_m^Tf(T_m)e_1},
\end{equation*}
where $\gamma$ is a constant depending on the spectrum of $A$ and
$f(z)=z^{-1}$ or $e^z$.

\subsection{A second upper bound for $\norm{E_m(e^z,\tau)}$} \label{subsec:4.2}

We now analyze expansion (\ref{expansion1}) when $f(z)=e^{z}$ and
the sequence $\{\phi_k(z)\}$ are defined as (\ref{phi}), derive
compact upper bounds for the first term and the sum of the rest in expansion
(\ref{expansion1}), and for the first time prove that $\norm{E_m(e^z,\tau)}$
is determined by the first term of the error expansion. This
is one of our main results in this paper.

For the Arnoldi approximation (\ref{Arnoldi approximation}) to $e^{-tA}v$,
since $F(H_m)\subseteq F(A)$ for $1\leq m\leq N$, in this subsection we
take the set $\Omega$ to be a closed
convex set containing the field of values $F(A)$ in
Theorem~\ref{error expansion1}. As a result,
expansion (\ref{expansion1}) becomes
$$E_m(e^z,t)=-t\beta h_{m+1,m}\sum_{k=1}^{\infty}%
(-t)^{k-1}e_m^T\phi_k(-t H_m)e_1 q_{k-1}(A)v_{m+1},$$
where $q_0=1,q_k=(z-z_0)\cdots(z-z_{k-1}),\ z_i\in F(A),\ i=0,\ldots,k-1,\ k\geq1.$

Denoting
$$E_m^{(2)}(e^z,t)=-t\beta h_{m+1,m}\sum_{k=2}^{\infty}%
(-t)^{k-1}e_m^T\phi_k(-t H_m)e_1 q_{k-1}(A)v_{m+1},
$$
we can present the following results.

\begin{theorem}\label{thm2}
Let $\mu_2=\mu_2[-A]$. Then
$E_m^{(2)}(e^z,\tau)$ and $E_m(e^z,\tau)$ satisfy
\begin{eqnarray}\label{result2}
\norm{E_m^{(2)}(e^z,\tau)}&\leq&\gamma_2\tau\beta h_{m+1,m}
\max_{0\leq t\leq\tau}\abs{e^T_m\phi_1(-tH_m)e_1},\nonumber\\
\norm{E_m(e^z,\tau)}&\leq& (1+\gamma_2)\tau\beta h_{m+1,m} \max_{0\leq t
\leq\tau}\abs{e^T_m\phi_1(-tH_m)e_1},
\end{eqnarray}
respectively, where
\begin{gather}\label{gamma2}
  \gamma_2=\left\{
  \begin{array}{ll}
  \norm{q_1(A)v_{m+1}}\dfrac{e^{\tau\mu_2}-1}{\mu_2}, & \mu_2\neq0 \\
  \tau \norm{q_1(A)v_{m+1}}, &  \mu_2=0.\\
  \end{array}
  \right.
\end{gather}
\end{theorem}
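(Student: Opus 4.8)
The plan is to convert the defining series for $E_m^{(2)}(e^z,\tau)$ into a single scalar integral against a matrix exponential, exactly in the spirit of the proof of Theorem~\ref{thm4}, and then estimate. Write $g_0(t)=e_m^Te^{-tH_m}e_1$, so that the proof of Theorem~\ref{thm4} gives $E_m(e^z,\tau)=-\beta h_{m+1,m}\int_0^\tau g_0(t)\,e^{(t-\tau)A}v_{m+1}\,dt$. The $k=1$ term of the expansion is $E_m^{(1)}(e^z,\tau)=-\tau\beta h_{m+1,m}\,(e_m^T\phi_1(-\tau H_m)e_1)\,v_{m+1}$, and the key preliminary observation is that it too is an integral of $g_0$: from the representation $\phi_1(z)=\int_0^1 e^{sz+(1-s)\tilde z_0}\,ds$ with node $\tilde z_0=-\tau z_0$ (this is just $\phi_1(z)=(e^z-1)/z$ when $z_0=0$) one checks $\tau\,e_m^T\phi_1(-\tau H_m)e_1=\int_0^\tau e^{(t-\tau)z_0}g_0(t)\,dt$. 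Subtracting $E_m^{(1)}$ from $E_m$ then yields
\[
E_m^{(2)}(e^z,\tau)=-\beta h_{m+1,m}\int_0^\tau g_0(t)\big(e^{(t-\tau)A}-e^{(t-\tau)z_0}I\big)v_{m+1}\,dt .
\]

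The heart of the proof is an integration by parts in $t$ that trades the exponential weight $g_0$ for the first divided-difference weight $e_m^T\phi_1(-tH_m)e_1$. I would introduce the antiderivative $\hat G(t)=\int_0^t e^{rz_0}g_0(r)\,dr$, so that $\hat G(0)=0$, $\hat G'(t)=e^{tz_0}g_0(t)$, and $e^{-tz_0}\hat G(t)=t\,e_m^T\phi_1(-tH_m)e_1$; and the matrix factor $\Psi(t)=e^{-tz_0+(t-\tau)A}-e^{-\tau z_0}I$, which satisfies $\Psi(\tau)=\mathbf{0}$ and $\Psi'(t)=(A-z_0I)e^{-tz_0}e^{(t-\tau)A}$. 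Since the integrand above equals $\hat G'(t)\Psi(t)v_{m+1}$ up to the factor $-\beta h_{m+1,m}$, and both boundary terms vanish (because $\hat G(0)=0$ and $\Psi(\tau)=\mathbf{0}$), integration by parts produces the clean representation
\[
E_m^{(2)}(e^z,\tau)=\beta h_{m+1,m}\int_0^\tau t\,\big(e_m^T\phi_1(-tH_m)e_1\big)\,e^{(t-\tau)A}\,q_1(A)v_{m+1}\,dt ,
\]
with $q_1(A)=A-z_0I$. This is precisely the step that makes the bound involve $\phi_1$, hence the first term of the expansion, rather than the raw $g_0$, at the price of an extra factor $t$ and of replacing $v_{m+1}$ by $q_1(A)v_{m+1}$.

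The remaining estimate is routine. Taking norms, factoring out $\norm{q_1(A)v_{m+1}}$ and $\max_{0\le t\le\tau}\abs{e_m^T\phi_1(-tH_m)e_1}$, and using item~4) of Proposition~\ref{prop4} in the form $\norm{e^{(t-\tau)A}}\le e^{(\tau-t)\mu_2}$ with $\mu_2=\mu_2[-A]$, reduces everything to the scalar bound $\int_0^\tau t\,e^{(\tau-t)\mu_2}\,dt\le\tau\int_0^\tau e^{(\tau-t)\mu_2}\,dt=\tau\,\frac{e^{\tau\mu_2}-1}{\mu_2}$, with the limiting value $\tau^2$ when $\mu_2=0$ giving the second branch of $\gamma_2$ in (\ref{gamma2}). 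This is exactly the first inequality in (\ref{result2}). The second inequality then follows from $\norm{E_m}\le\norm{E_m^{(1)}}+\norm{E_m^{(2)}}$ together with $\norm{E_m^{(1)}}=\tau\beta h_{m+1,m}\abs{e_m^T\phi_1(-\tau H_m)e_1}\le\tau\beta h_{m+1,m}\max_{0\le t\le\tau}\abs{e_m^T\phi_1(-tH_m)e_1}$.

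The main obstacle I anticipate is organizing the integration by parts for a general node $z_0\in F(A)$: one must choose the antiderivative $\hat G$ and the matrix factor $\Psi$ so that both boundary contributions vanish and so that differentiating $\Psi$ reproduces exactly the factor $A-z_0I=q_1(A)$, while simultaneously recognizing $e^{-tz_0}\hat G(t)$ as $t\,e_m^T\phi_1(-tH_m)e_1$. Getting this bookkeeping right for a $t$-dependent node $\tilde z_0=-tz_0$ is where care is needed; everything simplifies considerably at $z_0=0$, where $\Psi(t)=e^{(t-\tau)A}-I$ and $\hat G(t)=\int_0^t g_0(r)\,dr$, and after that only the direct norm estimate remains.
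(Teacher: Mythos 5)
Your proposal is correct, and it arrives at exactly the paper's key intermediate identity (\ref{e_m2}), namely $E_m^{(2)}(e^z,\tau)=\beta h_{m+1,m}\bigl(\int_0^{\tau}t\,e^T_m\phi_1(-tH_m)e_1\, e^{(t-\tau)A}dt\bigr)q_1(A)v_{m+1}$, but by a genuinely different route. The paper augments the Hessenberg matrix to $\overline{H}_m$ with blocks $H_m-z_0I$ and $h_{m+1,m}e_m^T$, observes via the block formula (\ref{e_tT}) that $E_m^{(2)}(e^z,t)$ is precisely the error of the auxiliary approximation $w_m^{(2)}(t)=\beta V_{m+1}e^{-t(\overline{H}_m+z_0I)}e_1$, and then solves the inhomogeneous ODE $E_m^{(2)\prime}=-AE_m^{(2)}+g(t)$ with $g(t)=t\beta h_{m+1,m}e^T_m\phi_1(-tH_m)e_1\,q_1(A)v_{m+1}$ by variation of constants. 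You instead subtract an integral representation of the first expansion term from the integral representation of $E_m(e^z,\tau)$ already available from the proof of Theorem~\ref{thm4} and integrate by parts; your identity $\tau e_m^T\phi_1(-\tau H_m)e_1=\int_0^\tau e^{(t-\tau)z_0}g_0(t)\,dt$ and the choices of $\hat G$ and $\Psi$ check out (with the convention, shared implicitly by the paper, that $\phi_1$ evaluated at $-tH_m$ carries the $t$-dependent node $-tz_0$), the boundary terms vanish as you claim, and from (\ref{e_m2}) onward the two arguments coincide, including the bound $\int_0^\tau te^{(\tau-t)\mu_2}dt\leq\tau\frac{e^{\tau\mu_2}-1}{\mu_2}$ that produces $\gamma_2$ and the triangle inequality that yields the second line of (\ref{result2}). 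The trade-off: the paper's augmented-matrix device is the same mechanism later used to compute $\xi_1$ and $\xi_2$ in practice and extends naturally to higher $\phi_k$ by further augmentation, whereas your integration by parts is more elementary and makes transparent exactly where the extra factor of $t$ and the replacement of $v_{m+1}$ by $q_1(A)v_{m+1}$ come from.
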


\begin{proof}
Define the $(m+1)\times(m+1)$ matrix
\begin{equation*}
\overline{H}_m\equiv\left(\begin{array}{cc}H_m-z_0I & \mathbf{0} \\
h_{m+1,m}e_m^T & 0\end{array}\right)
\end{equation*} and
$$
w(t)=e^{-tA}v,\ w_m^{(2)}(t)=\beta V_{m+1}e^{-t(\overline{H}_m+z_0I)}e_1,
$$
where $H_m,\,h_{m+1,m}$ and $V_{m+1}=[V_m,\,v_{m+1}]$ are generated
by the Arnoldi decomposition (\ref{Arnoldi decomposition}). Then
\begin{equation}\label{e_tT}
e^{-t\overline{H}_m}=\left(\begin{array}{ccc}e^{tz_0}e^{-tH_m} & & \mathbf{0} \\
-th_{m+1,m}e^{tz_0}e_m^T\phi_1(-tH_m) & & 1\end{array}\right)
\end{equation} and
\begin{eqnarray}\label{em2}
  w(t)- w_m^{(2)}(t) &=& e^{-tA}v-\beta V_{m+1} \left(\begin{array}{c}
                                                        e^{-tH_m}e_1\\
                                                        -th_{m+1,m}e_m^T\phi_1(-tH_m)e_1
                                                        \end{array}\right)\nonumber\\
   &=& e^{-tA}v-\beta V_me^{-tH_m}+t\beta h_{m+1,m}e_m^T\phi_1(-tH_m)e_1 v_{m+1}\\
   &=& E_m^{(2)}(e^z,t)\nonumber.
\end{eqnarray}
According to
\begin{align*}
w'(t) &= -Aw(t), & w(0) &= v,\\
w_m^{(2)'}(t) & = -\beta V_{m+1}(\overline{H}_m+z_0I)e^{-t(\overline{H}_m+z_0I)}
e_1, & w_m^{(2)}(0) &= v,
\end{align*}
we get
$$
E_m^{(2)'}(e^z,t)=-Aw(t)+\beta V_{m+1}(\overline{H}_m+z_0I)
e^{-t(\overline{H}_m+z_0I)}e_1.
$$
From (\ref{Arnoldi decomposition}), we have
$$V_{m+1}(\overline{H}_m+z_0I)=[\ V_m,\ v_{m+1}\ ]\left[\begin{array}{cc}H_m &
\mathbf{0} \\
h_{m+1,m}e_m^T & z_0\end{array}\right]=[\ AV_m,\ z_0v_{m+1}\ ].$$
 Then it follows from the above that
\begin{eqnarray*}
E_m^{(2)'}(e^z,t) & = & -Aw(t)+\beta[\ AV_m,\ z_0v_{m+1}\ ]
e^{-t(\overline{H}_m+z_0I)}e_1\\
& = & -A\left(w(t)-w_m^{(2)}(t)\right)-Aw_m^{(2)}(t)+
\beta[\ AV_m,\ z_0v_{m+1}\ ]e^{-t(\overline{H}_m+z_0I)}e_1\\
& = & -AE_m^{(2)}(e^z,t)-\beta [\ \mathbf{0},\ q_1(A)v_{m+1}\ ]
e^{-t(\overline{H}_m+z_0I)}e_1,
\end{eqnarray*}
where $q_1(z)=z-z_0$. Therefore, from (\ref{e_tT}) we get
\begin{equation*}
\Big\{ \begin{array}{l}E_m^{(2)'}(e^z,t)=-AE_m^{(2)}(e^z,t)+g(t)\\
E_m^{(2)}(e^z,0)=0,
\end{array}
\end{equation*}
where $g(t)=t\beta h_{m+1,m}e^T_m\phi_1(-tH_m)e_1 q_1(A)v_{m+1}$.

Solving the above ODE for $E_m^{(2)}(e^z,\tau)$, we obtain
\begin{equation}\label{e_m2}
E_m^{(2)}(e^z,\tau)=\beta h_{m+1,m}\left(\int_0^{\tau}te^T_m\phi_1(-tH_m)e_1
\ e^{(t-\tau)A}dt\right)\ q_1(A)v_{m+1}.
\end{equation}
Taking the norms on the two sides and defining $\gamma_2$ as (\ref{gamma2}),
we get
\begin{equation*}
\norm{E_m^{(2)}(e^z,\tau)}\leq\gamma_2\tau\beta h_{m+1,m}
\max_{0\leq t\leq\tau}\abs{e^T_m\phi_1(-tH_m)e_1}
\end{equation*}
and
\begin{eqnarray*}
 \norm{E_m(e^z,\tau)} &\leq& \tau\beta h_{m+1,m}\abs{e^T_m\phi_1
 (-\tau H_m)e_1}+\norm{E_m^{(2)}(e^z,\tau)} \\
    &\leq& (1+\gamma_2)\tau\beta h_{m+1,m} \max_{0\leq t\leq\tau}
    \abs{e^T_m\phi_1(-tH_m)e_1},
\end{eqnarray*}
which completes the proof.
\end{proof}

Let $f(z)=e^z$ and $\phi_1(z)=\dfrac{e^z-e^{z_0}}{z-z_0}$. Then
the divided differences of $f(z)$ and $\phi_1(z)$ have the following relationship.
\begin{lem}\label{lem1}
Given $z_1,z_2,\ldots,z_m \in \Omega$, we have
\begin{equation}\label{diff2}
\phi_1[z_1,\ldots,z_m]=f[z_0,z_1,\ldots,z_m].
\end{equation}
\end{lem}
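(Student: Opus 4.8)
The plan is to prove the identity by induction on $m$, treating it as a purely combinatorial fact about divided differences rather than anything special to $e^z$; the argument uses only that $f$ is smooth enough for its divided differences to be well defined and continuous in their arguments. The starting point is (\ref{phifun}) with $k=0$, which gives $\phi_1(z)=f[z,z_0]$. Thus evaluating $\phi_1$ at a single node is already a first-order divided difference of $f$, and the task reduces to showing that taking a further $(m-1)$-st divided difference of $\phi_1$ simply appends the extra nodes to the list.

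For the base case $m=1$, I would note that $\phi_1[z_1]=\phi_1(z_1)=f[z_1,z_0]=f[z_0,z_1]$, where the last equality is the symmetry of divided differences in their arguments. For the inductive step, assume the claim for every node list of length $m-1$. Expanding the outermost divided difference by the standard recurrence,
\[
\phi_1[z_1,\ldots,z_m]=\frac{\phi_1[z_2,\ldots,z_m]-\phi_1[z_1,\ldots,z_{m-1}]}{z_m-z_1},
\]
and applying the induction hypothesis to the two length-$(m-1)$ sublists, the right-hand side becomes
\[
\frac{f[z_0,z_2,\ldots,z_m]-f[z_0,z_1,\ldots,z_{m-1}]}{z_m-z_1}.
\]

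The one step that needs care, and the main obstacle, is recognizing this last quotient as $f[z_0,z_1,\ldots,z_m]$. The elementary recurrence for $f[z_0,\ldots,z_m]$ removes the first and last nodes of the list, which would drop $z_0$; what I need instead is the version that keeps $z_0$ fixed and removes $z_1$ and $z_m$. This is available because divided differences are fully symmetric in their arguments: reordering the nodes so that $z_1$ and $z_m$ play the role of the two extreme nodes and then applying the standard recurrence yields
\[
f[z_0,z_1,\ldots,z_m]=\frac{f[z_0,z_2,\ldots,z_m]-f[z_0,z_1,\ldots,z_{m-1}]}{z_m-z_1},
\]
which matches the quotient above and closes the induction. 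Finally, since $f$ is analytic, all the divided differences appearing here are continuous functions of their nodes, so the identity, first established for distinct $z_1,\ldots,z_m$, extends to the confluent case (repeated nodes, interpreted via (\ref{diff1})) by continuity.
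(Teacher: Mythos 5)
Your proof is correct and follows essentially the same route as the paper's: induction on the number of nodes, starting from $\phi_1(z)=f[z,z_0]$ and pushing the extra node $z_0$ through the divided-difference recurrence. The only cosmetic differences are which pair of nodes you peel off in the recurrence (you remove $z_1$ and $z_m$ and invoke symmetry explicitly, while the paper removes $z_k$ and $z_{k+1}$), and your added remark on the confluent case, which the paper omits.
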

\begin{proof}
We prove the lemma by induction. For $i=1$,
$$\phi_1[z_1]=\frac{e^{z_1}-e^{z_0}}{z_1-z_0}=f[z_0,z_1].$$
So (\ref{diff2}) is true.

Assume that (\ref{diff2}) holds for $i=k,\ k<m$, i.e.,
$$\phi_1[z_1,\ldots,z_k]=f[z_0,z_1,\ldots,z_k].$$
Then for $i=k+1$, we get
\begin{eqnarray*}
\phi_1[z_1,\ldots,z_{k+1}] & = & \frac{\phi_1[z_1,\ldots,z_{k-1},z_{k+1}]-
\phi_1[z_1,\ldots,z_k]}{z_{k+1}-z_k}\\
& = & \frac{f[z_0,z_1,\ldots,z_{k-1},z_{k+1}]-f[z_0,z_1,\ldots,z_k]}{z_{k+1}-z_k}\\
& = & f[z_0,z_1,\ldots,z_{k+1}].
\end{eqnarray*}
Thus, the lemma is true.
\end{proof}

With $\gamma_1$ and $\mu_2$ defined as in Theorem \ref{thm1}, we can refine
Theorem~\ref{thm2} and get an explicit and compact bound for
$\norm{E_m(e^z,\tau)}$ when $A$ is Hermitian.

\begin{theorem}\label{thm3}
Assume that $spec(A)$ is contained in the interval $\Lambda\equiv[a,\,b]$ and
the Lanczos process {\rm (\ref{lanczos})} can be run $m$ steps without breakdown.
Then we have
\begin{eqnarray*}
\norm{E_m^{(2)}(e^z,\tau)}&\leq& \gamma_3\tau\beta\eta_{m+1}
\abs{e_m^T\phi_1(-\tau T_m)e_1},\\
\norm{E_m(e^z,\tau)}&\leq &(1+\gamma_3)\tau\beta\eta_{m+1}
\abs{e_m^T\phi_1(-\tau T_m)e_1},
\end{eqnarray*}
where $\gamma_3=\tau\gamma_1\norm{(A-z_0I)v_{m+1}}$ and
$\gamma_1$ is defined as {\rm (\ref{gamma1})} for any $z_0\in \Lambda$.
\end{theorem}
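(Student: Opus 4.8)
The plan is to specialize the integral representation (\ref{e_m2}) obtained in the proof of Theorem~\ref{thm2} to the Hermitian case, so that $h_{m+1,m}$ and $H_m$ become $\eta_{m+1}$ and $T_m$ and $q_1(A)v_{m+1}=(A-z_0I)v_{m+1}$, and then to exploit the tridiagonal structure of $T_m$ exactly as in the proof of Theorem~\ref{thm1}. Let $\lambda_1,\ldots,\lambda_m$ be the eigenvalues of $T_m$, which are distinct by Proposition~\ref{prop2}. Writing $\phi_1(-tT_m)$ in Newton form at the nodes $-t\lambda_1,\ldots,-t\lambda_m$ and using $e_m^TT_m^ie_1=0$ for $i<m-1$, only the leading divided difference survives, giving
\begin{equation*}
e_m^T\phi_1(-tT_m)e_1=(-t)^{m-1}\,\phi_1[-t\lambda_1,\ldots,-t\lambda_m]\,e_m^TT_m^{m-1}e_1 ,
\end{equation*}
the exact analogue of (\ref{err-t}). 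Since the factor $e_m^TT_m^{m-1}e_1$ does not depend on $t$, it cancels upon forming the quotient of the values at $t$ and at $\tau$.

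Next I would factor $e_m^T\phi_1(-\tau T_m)e_1$ out of (\ref{e_m2}), which yields
\begin{equation*}
E_m^{(2)}(e^z,\tau)=\beta\eta_{m+1}\,\big(e_m^T\phi_1(-\tau T_m)e_1\big)\left(\int_0^\tau t\left(\frac{t}{\tau}\right)^{m-1}\frac{\phi_1[-t\lambda_1,\ldots,-t\lambda_m]}{\phi_1[-\tau\lambda_1,\ldots,-\tau\lambda_m]}\,e^{(t-\tau)A}\,dt\right)(A-z_0I)v_{m+1}.
\end{equation*}
To control the divided-difference quotient I would invoke Lemma~\ref{lem1} to rewrite $\phi_1[-t\lambda_1,\ldots,-t\lambda_m]=f[z_0,-t\lambda_1,\ldots,-t\lambda_m]$ for $f(z)=e^z$, and then, since $A$ is Hermitian, all nodes are real and the mean-value form (\ref{diff1}) applies; because $f^{(m)}(z)=e^z$, each divided difference equals $e^{\zeta}/m!$ for some $\zeta$ in the smallest interval containing the nodes, and the quotient is bounded by $e^{\tau(b-a)}$ exactly as in (\ref{inequality2}). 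Taking norms and bounding the quotient by $e^{\tau(b-a)}$, bounding $\norm{e^{(t-\tau)A}}\le e^{(\tau-t)\mu_2}$ by Proposition~\ref{prop4} with $\mu_2=\mu_2[-A]=-\lambda_{\min}(A)$, and using $t(t/\tau)^{m-1}\le\tau$ on $[0,\tau]$, the resulting scalar integral is at most $\tau e^{\tau(b-a)}\int_0^\tau e^{(\tau-t)\mu_2}\,dt=\tau e^{\tau(b-a)}(e^{\tau\mu_2}-1)/\mu_2=\tau^2\gamma_1$, with $\gamma_1$ as in (\ref{gamma1}); the case $\mu_2=0$ is identical with $\int_0^\tau 1\,dt=\tau$ and the corresponding branch of (\ref{gamma1}). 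Collecting the factors and recalling $\gamma_3=\tau\gamma_1\norm{(A-z_0I)v_{m+1}}$ then gives the first claimed bound.

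For the second inequality I would use that $E_m(e^z,\tau)$ is the $k=1$ term of the expansion, namely $-\tau\beta\eta_{m+1}\big(e_m^T\phi_1(-\tau T_m)e_1\big)v_{m+1}$, plus $E_m^{(2)}(e^z,\tau)$; since $\norm{v_{m+1}}=1$, the triangle inequality together with the first bound produces the factor $1+\gamma_3$. I expect the main obstacle to be the divided-difference quotient: passing through Lemma~\ref{lem1} replaces the node set $\{-t\lambda_i\}$ by $\{z_0,-t\lambda_1,\ldots,-t\lambda_m\}$, so the mean-value point now ranges over an interval enlarged by $z_0\in[a,b]$, and one must check carefully that the quotient is still governed by the width $b-a$ of $\Lambda$ so that the clean bound $e^{\tau(b-a)}$, and hence $\gamma_1$, is preserved for every admissible $z_0$. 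The remaining manipulations are the same ODE-and-integral bookkeeping already carried out in Theorems~\ref{thm1} and~\ref{thm2}.
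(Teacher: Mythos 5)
Your proposal follows essentially the same route as the paper's proof: specialize the integral representation (\ref{e_m2}) to the Lanczos case, use the tridiagonal structure of $T_m$ together with Propositions \ref{prop1}--\ref{prop2} and Lemma \ref{lem1} to reduce $e_m^T\phi_1(-tT_m)e_1$ to the single leading divided difference, bound the resulting quotient by $e^{\tau(b-a)}$ as in (\ref{inequality1}), finish the first inequality with the logarithmic-norm bound $\norm{e^{(t-\tau)A}}\leq e^{(\tau-t)\mu_2}$, and obtain the second by the triangle inequality against the first term of the expansion. The one point you flag at the end is real but is shared with (indeed glossed over by) the paper: Lemma \ref{lem1} inserts the \emph{fixed} node $z_0$, whereas the paper writes the extra node as $-tz_0$ so that (\ref{inequality1}) applies verbatim with all nodes in $-t\Lambda$; your version is the literal consequence of Lemma \ref{lem1}, and the stated constant $e^{\tau(b-a)}$ silently absorbs the enlargement of the node interval by $z_0\in[a,b]$.
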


\begin{proof}
It follows from (\ref{e_m2}) that for a Hermitian $A$ we have
\begin{equation*}\label{myerror}
E_m^{(2)}(e^z,\tau)=\beta \eta_{m+1}\left(\int_0^\tau te^T_m\phi_1(-tT_m)e_1\
e^{(t-\tau)A}dt\right)(A-z_0I)v_{m+1}.
\end{equation*}
Let $\lambda_1,\ldots,\lambda_m$ be the eigenvalues of $T_m$ and $z_0\in \Lambda$.
By Propositions \ref{prop1}--\ref{prop2} and Lemma \ref{lem1}, we know that
for all $t\geq0$
$$
\phi_1(-tT_m)=\sum_{i=0}^{m-1}\hat{a}_i(t)(-tT_m)^i,
$$
where
\begin{eqnarray*}
\hat{a}_{m-1}(t) & = & \phi_1[-t\lambda_1,\ldots, -t\lambda_m]\\
& = & f[-tz_0,-t\lambda_1,\ldots, -t\lambda_m].
\end{eqnarray*}
Similar to the proof of Theorem \ref{thm1}, we get
\begin{equation}\label{myerr3}
E_m^{(2)}(e^z,\tau)=\beta \eta_{m+1}e_m^T\phi_1(-\tau T_m)e_1\left(\int_{0}^{\tau}%
\frac{t^m}{\tau^{m-1}}\frac{\hat{a}_{m-1}(t)}{\hat{a}_{m-1}(\tau)}
e^{(t-\tau)A}dt\right)(A-z_0I)v_{m+1}
\end{equation}
and
\begin{equation}\label{inequality1}
   \abs{\frac{\hat{a}_{m-1}(t)}{\hat{a}_{m-1}(\tau)}}\leq
   \frac{\max_{\zeta\in -t\Lambda}\abs{f^{(m)}(\zeta)}}
   {\min_{\zeta\in -\tau \Lambda}\abs{f^{(m)}(\zeta)}}\leq
   \frac{\max_{\zeta\in -\tau \Lambda}\abs{f^{(m)}(\zeta)}}{\min_{\zeta\in -\tau
   \Lambda}\abs{f^{(m)}(\zeta)}}=e^{\tau(b-a)}.
\end{equation}
Substituting it into (\ref{myerr3}) gives
\begin{eqnarray*}
\norm{E_m^{(2)}(e^z,\tau)} 
& \leq &\tau\beta\eta_{m+1}\abs{e_m^T\phi_1(-\tau T_m)e_1}e^{\tau(b-a)}
\norm{(A-z_0 I)v_{m+1}}\int_{0}^{\tau}e^{(\tau-t)\mu_2}dt\\
& \leq &\gamma_3\tau\beta\eta_{m+1}\abs{e_m^T\phi_1(-\tau T_m)e_1}
\end{eqnarray*}
and
\begin{eqnarray*}
\norm{E_m(e^z,\tau)} 
\int_{0}^{\tau}
& \leq &\tau\beta\eta_{m+1}\abs{e_m^T\phi_1(-\tau T_m)e_1}+\norm{E_m^{(2)}
(e^z,\tau)}\\
& \leq &(1+\gamma_3)\tau\beta\eta_{m+1}\abs{e_m^T\phi_1(-\tau T_m)e_1},
\end{eqnarray*}
where $\gamma_3=\tau\gamma_1\norm{(A-z_0 I)v_{m+1}}$.
\end{proof}

\begin{rem}
Remarkably, noting that $\|(A- z_0 I)v_{m+1}\|$ is
typically comparable to $\|A\|$ whenever $z_0\in F(A)$,
Theorem~\ref{thm3} shows that the error norm
$\norm{E_m(e^z,\tau)}$ is essentially determined by the first term
of the error expansion provided that
$\gamma_3$, or equivalently $\gamma_1$ is mildly sized.

\end{rem}

\section{Some a posteriori error estimates for approximating $f(A)v$}
\label{sec:5}

Previously we have established the error expansion of the Krylov-like approximation
for sufficiently smooth functions $f(z)$ and derived some upper bounds for the
Arnoldi approximation to $e^{-\tau A}v$. They form the basis of seeking
reliable a posteriori error estimates.

Define
\begin{equation} \label{criter}
    \xi_1=\beta h_{m+1,m}\abs{e_m^Tf(H_m)e_1}\ \text{and}\ \xi_2=
    \beta h_{m+1,m}\abs{e_m^T\phi_1(H_m)e_1}.
\end{equation}
Formally, as reminiscent of the residual formula for the Arnoldi
method for solving linear systems, Saad \cite{arnoldi2} first
proposed using $\xi_1$ as an a posteriori
error estimate when $f(z)=e^z$ without any theoretical
support or justification. Due to the lack of
a definition of residual, an insightful interpretation of
$\xi_1$ as an a posteriori estimate is not always immediate.
In \cite{rd4,other5,polykry4}, the authors have
introduced a generalized residual notion for the Arnoldi
approximation to $f(A)v$, which can be used to justify
the rationale of $\xi_1$. Let us briefly review why it is so.
By the Cauchy integral definition \cite[p. 8]{book3}, the error
of the $m$th Arnoldi approximation to $f(A)v$ can be expressed as

\begin{equation}\label{ECauchy}
    E_m(f)=\frac{1}{2\pi i}\int_\Gamma f(\lambda)\left[(\lambda I-A)^{-1}v-
    \beta V_m(\lambda I-H_m)^{-1}e_1\right]d\lambda,
\end{equation}
where $\Gamma$ is the contour enclosing $F(A)$. For the Arnoldi method
for solving the linear system $(\lambda I-A)x=v$, the error is
\begin{equation*}
    e_m(\lambda)=(\lambda I-A)^{-1}v-\beta V_m(\lambda I-H_m)^{-1}e_1,
\end{equation*}
and the residual is $r_m(\lambda)=(\lambda I-A)e_m(\lambda)$,
which, by the Arnoldi decomposition, is expressed as
\begin{equation}\label{rm1}
    r_m(\lambda)=\beta h_{m+1,m}\left(e_m^T(\lambda I-H_m)^{-1}e_1\right)v_{m+1}.
\end{equation}
From the notations above, (\ref{ECauchy}) becomes
\begin{equation*}
    E_m(f)=\frac{1}{2\pi i}\int_\Gamma f(\lambda)e_m(\lambda)d\lambda.
\end{equation*}
Replacing $e_m(\lambda)$ by $r_m(\lambda)$, one defines the generalized residual
\begin{equation*}
    R_m(f)=\frac{1}{2\pi i}\int_\Gamma f(\lambda)r_m(\lambda)d\lambda,
\end{equation*}
which exactly equals the standard residual $r_m$ of the linear system
$(\lambda I-A)x=v$ by taking $f(\lambda)=(\lambda I-A)^{-1}$.
Substituting (\ref{rm1}) into the above leads to
\begin{equation*}\label{rm2}
    R_m(f)=\beta h_{m+1,m}\left(e_m^Tf(H_m)e_1\right)v_{m+1}.
\end{equation*}
Since $\xi_1=\norm{R_m(f)}$, $\xi_1$ can be interpreted as an a posteriori
error estimate and used to design a stopping criterion
for the Arnoldi approximation to $f(A)v$ for an analytic function $f(z)$ over
$\Gamma$.

The theoretical validity of $\xi_2$ as an a posteriori error
estimate has been unclear and not been justified even for $f(z)=e^z$ because
there has been no estimate for
$\|E_m^{(2)}(e^z,\tau)\|$ before. With our bounds established in Section 4,
it is now expected that both $\xi_1$ and $\xi_2$ can be naturally
used as reliable a posteriori error estimates for $\norm{E_m(e^z,\tau)}$
{\em without the help of the generalized residual notion}. As a matter of fact,
our bounds have essentially given close relationships between the a
priori error norm $\|E_m(e^z,\tau)\|$ and the a posteriori
quantities $\xi_1$ and $\xi_2$. Furthermore, based on the error expansion
in Theorem \ref{error expansion1} and the remarks followed,
we can justify the rationale of $\xi_2$ as an a posteriori error estimate for
the error norm $\norm{E_m(f)}$ of the Krylov-like approximation for
sufficiently smooth functions $f(z)$.
In what follows we will give more details,
showing why $\xi_1$ and $\xi_2$ are generally reliable and accurate
a posteriori error estimates. We will confirm their effectiveness by
numerical experiments.

\subsection{The case that $A$ is Hermitian} \label{subsec:5.1}

For a Hermitian $A$, we can give a new justification of $\xi_1$
for the Arnoldi approximation to $e^{-\tau A}v$. Theorem \ref{thm1} shows that
$\norm{E_m(e^z,\tau)}\leq \gamma_1\xi_1$, where $\gamma_1$ is defined
as (\ref{gamma1}), while Theorem \ref{thm3} indicates
that $\norm{E_m(e^z,\tau)}\leq (1+\gamma_3)\xi_2$ with $\gamma_3=
\tau\gamma_1\norm{(A-z_0I)v_{m+1}}$. Note that whenever $\gamma_1$
is mildly sized, that both $\xi_1$ and $\xi_2$ must be
very good estimates for the true error norm. Note that inequalities
(\ref{inequality2}) and (\ref{inequality1}) should generally be conservative
as the factor $\gamma_1$ is maximized in the worst case. Thus, each of
$\xi_1$ and $\xi_2$ is expected to determine the
error norm $\norm{E_m(e^z,\tau)}$ reliably
in practical computations, even though $\gamma_1$ or $\gamma_3$ is large.
Numerical experiments will illustrate that our relative estimates
defined by (\ref{relest})  mimic the relative error defined by
(\ref{relerror}) very well for very big $\gamma_1$ and $\gamma_3$.

All the experiments in this paper are performed on Intel(R) Core(TM)2 Duo
CPU T6600 @2.20GHz with RAM 2.00 GB using {\sc Matlab} 7.8.0 under
a Windows XP operating system. $f(H_m)$ and $\phi_1(H_m)$ are computed by means of
the spectral decomposition when $H_m$ is Hermitian or by the {\sc Matlab} built-in
functions {\bf funm}, which is replaced by \textbf{expm} if $f(z)=e^z$.
To illustrate the effectiveness of $\xi_1$ and $\xi_2$, we compute the
``exact" solution $f(A)v$ by first using the above functions to calculate
$f(A)$ explicitly and then multiplying it with $v$. Keep in mind that $\Omega$
contains the field of values $F(H_m)$ and we can
always require all $z_0,z_1,\ldots \in F(H_m)$. However, it is only
$z_0$ that is needed to define $\phi_1(z)$ in order to compute $\xi_2$.
Note that all the diagonal entries $h_{i,i}$ of $H_m$ lie in $F(H_m)$.
Therefore, to be unique, here and hereafter, in practical implementations,
we simply take
$$
z_0=h_{1,1}
$$
without any extra cost. Certainly,
there are infinitely many choices of $z_0$. Experimentally, we have found that
the choice of $z_0$ has little essential effect on the size of $\xi_2$.
With $z_0$ given, we have the function $\phi_1(z)$ and can compute
the approximation $f_m=\beta V_mf(H_m)e_1$. We compute
the a posteriori error estimates $\xi_1$ and $\xi_2$
using the method in \cite{arnoldi2,Sidje98}. Let $\bar{H}_m=\left(
                                                   \begin{array}{cc}
                                                     H_m & \textbf{0} \\
                                                     e_m^T & z_0 \\
                                                   \end{array}
                                                 \right).
$ It is known that if $f(z)$ is analytic on and inside $\Omega$ then
$$f(\bar{H}_m)=\left(
               \begin{array}{cc}
                 f(H_m) & \textbf{0} \\
                 e_m^T\phi_1(H_m) & f(z_0) \\
               \end{array}
             \right).
$$
As a result, $f_m=\beta V_m\left[f(\bar{H}_m)e_1\right]_{1:m},$
$\xi_1=\beta \abs{\left[f(\bar{H}_m)e_1\right]_{m}},$ and
$\xi_2=\beta \abs{\left[f(\bar{H}_m)e_1\right]_{m+1}}.$

The Arnoldi decomposition is performed with the modified Gram-Schmidt process
(see, e.g., \cite{arnoldi2}) until the approximation
$f_m=\beta V_mf(H_m)e_1$ satisfies
\begin{equation} \label{relerror}
\norm{f(A)v-f_m}/\norm{f(A)v}\leq \epsilon.
\end{equation}
We take $\epsilon=10^{-12}$ in the experiments. We define relative
posterior estimates as
\begin{equation} \label{relest}
\xi_1^{rel}:=\frac{\xi_1}{\|f_m\|}, \ \xi_2^{rel}:=\frac{\xi_2}{\|f_m\|},
\end{equation}
and compare them with the true relative error (\ref{relerror}). We
remark that in (\ref{relest}) we use the easily computable quantity
$\|f_m\|=\beta \|f(H_m)e_1\|$ to replace $\|f(A)v\|$, which is unavailable
in practice. Note that $\|f_m\|$ approximates $\|f(A)v\|$ whenever $f_m$
approaches $f(A)v$. Therefore, the sizes of $\xi_1^{rel}$  and $\xi_2^{rel}$
are very comparable to their corresponding counterparts that
use $\|f(A)v\|$ as the denominator once the convergence is starting.
If $f_m$ is a poor approximation to $f(A)v$, as is typical in very first
steps, both the error in (\ref{relerror}) and error estimates in
(\ref{relest}) are not small. Nevertheless, such replacement does not cause
any problem  since whether or not accurately estimating
a large error is unimportant, and what is of interest is
to reasonably check the convergence with increasing $m$.


\emph{Example 1.} We justify the effectiveness of $\xi_1^{rel}$ and $\xi_2^{rel}$
for $f(A)=e^{-\tau A}$. Consider the diagonal matrix $A$ of size $N=1001$ taken
from \cite{error2} with equidistantly spaced eigenvalues in the interval
$[0,\,40]$. The $N$-dimensional vector $v$ is generated randomly in
a uniform distribution with $\norm{v}=1$. For this $A$ and $\tau=0.1,\ 0.5, 1$,
we have $\lambda_{\min}(A)=0$ and $\gamma_1=e^4,\ e^{20},\ e^{40}$, which
are approximately $ 54.6,\ 4.9\times 10^8,\
2.4\times 10^{17}$, respectively. We see $\gamma_1$ varies drastically with
increasing $\tau$. The corresponding three $\gamma_3=\tau \gamma_1
\|(A-z_0 I)v_{m+1}\|$ are the same order as $\tau \gamma_1\|A\|$
and change in a similar way.

\begin{figure}[!htb]
\centering
\begin{minipage}[t]{0.45\linewidth}
    \centering
    \includegraphics[width=6.5cm,height=6.5cm]{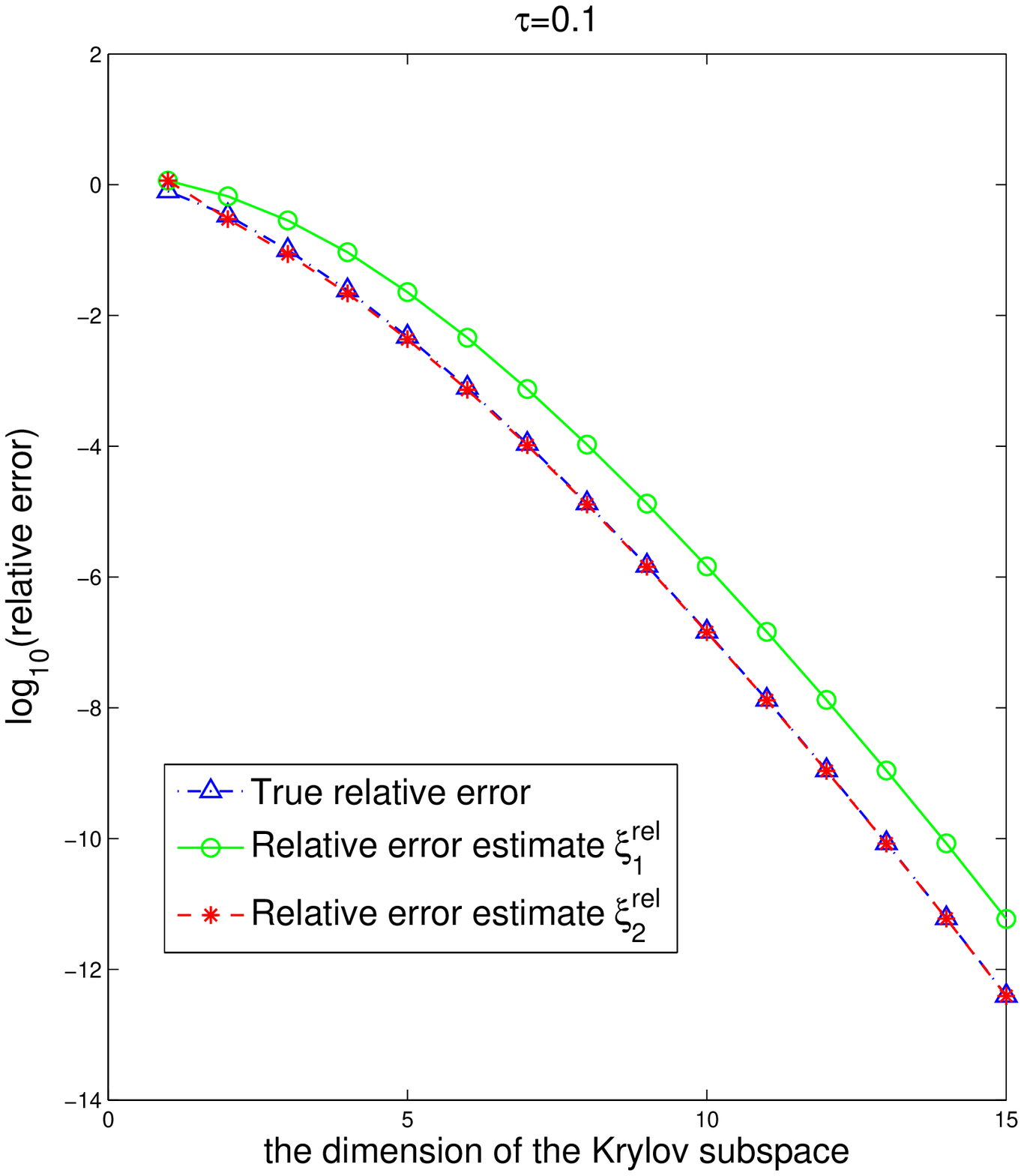}
\end{minipage}
\hspace{2ex}
\begin{minipage}[t]{0.45\linewidth}
    \centering
    \includegraphics[width=6.5cm,height=6.5cm]{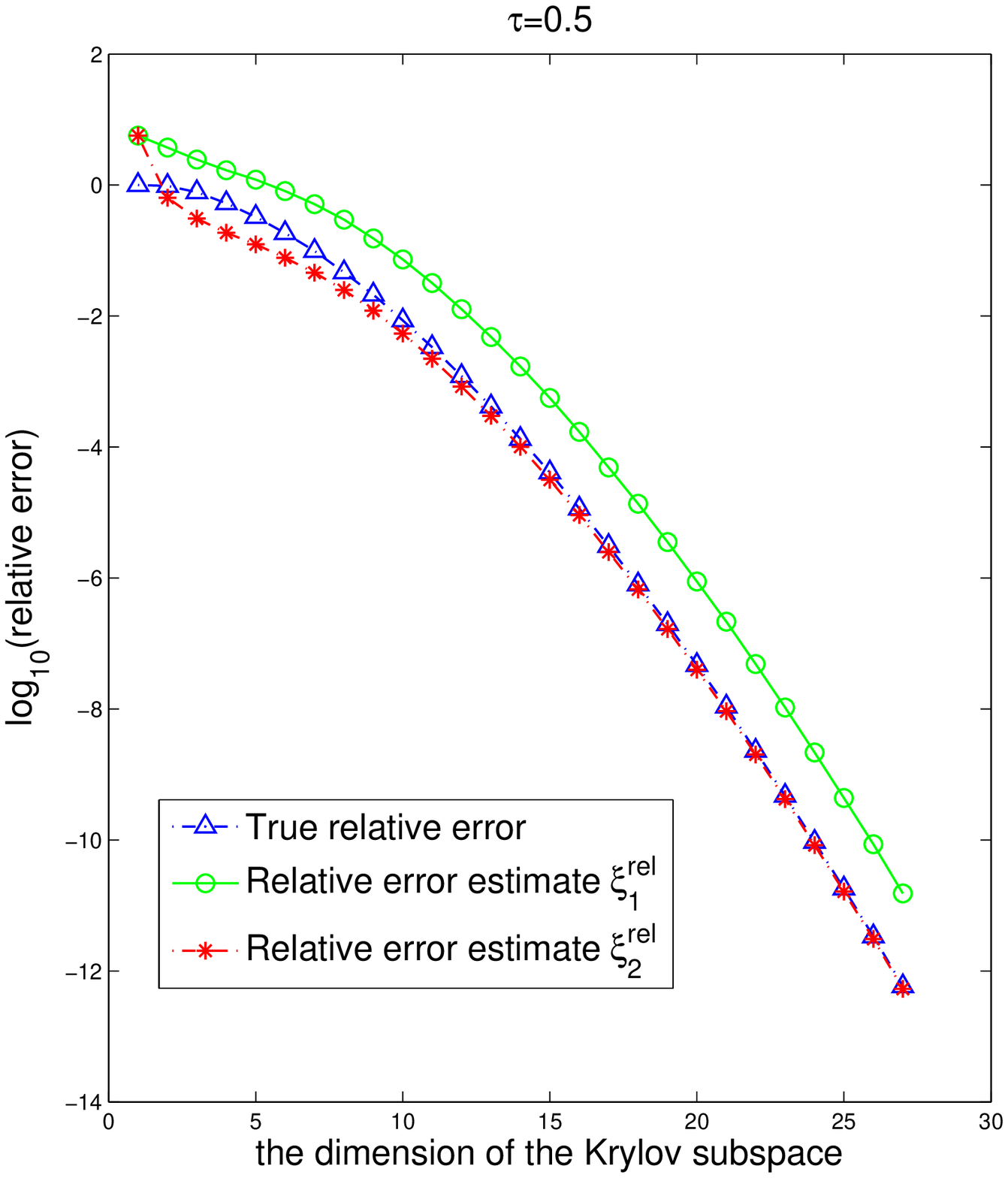}
\end{minipage}
\centering
\includegraphics[width=6.5cm,height=6.5cm]{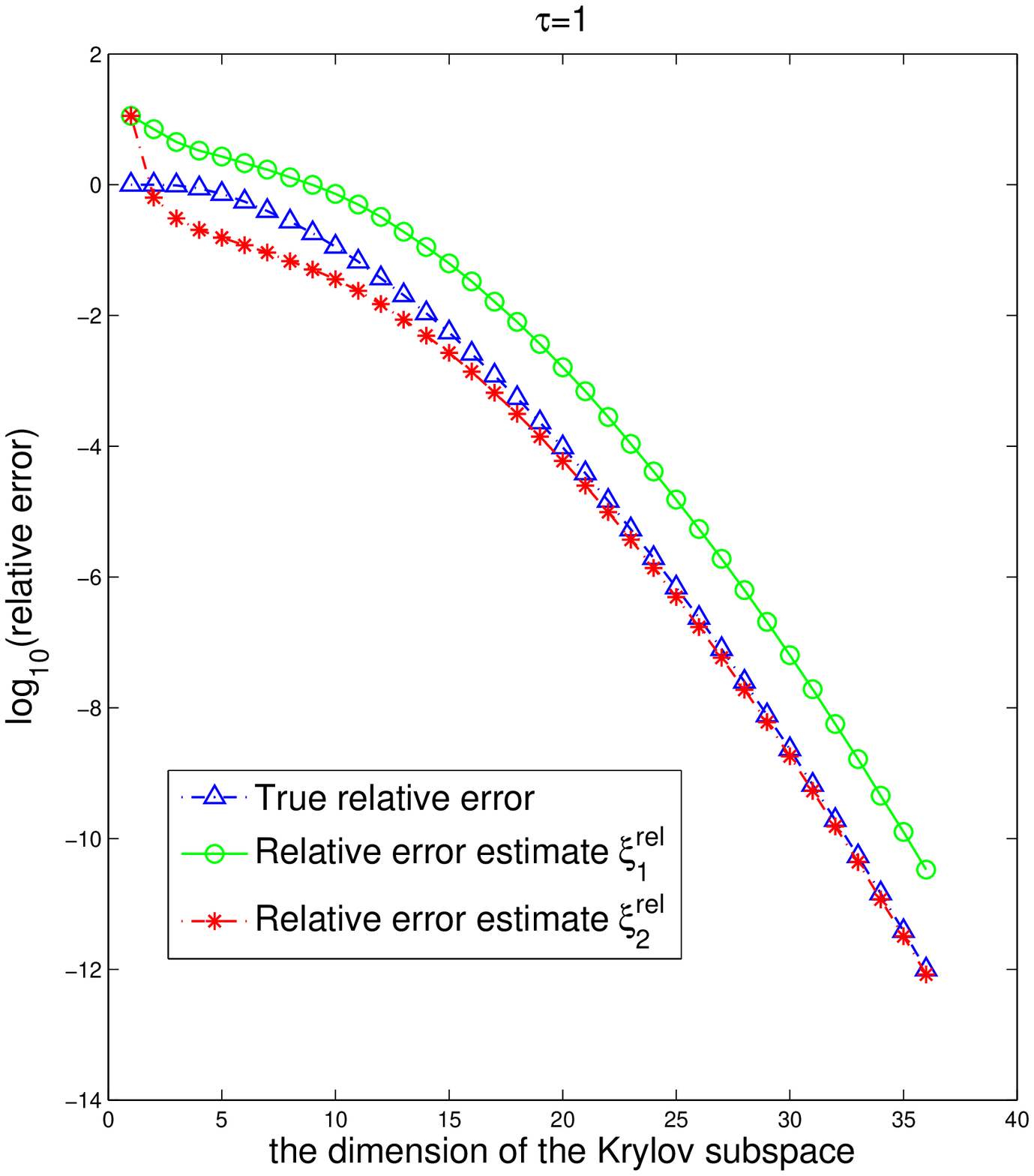}
\caption{Example 1: The relative error estimates and the true relative error
for $e^{-\tau A}v$ for $A$ symmetric with $N=1001$}\label{figure1}
\end{figure}

Figure~\ref{figure1} depicts the curves of the two relative error estimates
$\xi_1^{rel}$, $\xi_2^{rel}$ and the true relative error (\ref{relerror}) for
the Lanczos approximations
to $e^{-\tau A}v$ with three parameters $\tau=0.1,\,0.5$ and $1$.
It is seen that both estimates $\xi_1^{rel}$ and $\xi_2^{rel}$
are accurate, and particularly $\xi_2^{rel}$ is indistinguishable from the
true relative error
as $m$ increases and is sharper than $\xi_1^{rel}$ by roughly one order.
Remarkably, it is observed from the figure that
the effectiveness of $\xi_1^{rel}$ and $\xi_2^{rel}$ is little affected by
greatly varying $\gamma_1$ and $\gamma_3$, which themselves critically
depend on the spectrum of $A$ and $\tau$. This
demonstrates that $\gamma_1$ and $\gamma_3$ are too conservative,
they behave like $O(1)$ in practice and
our estimates are accurate and mimic the true error norms very well.
However, as a byproduct, we find that the smaller $\tau$ is,
the faster the Lanczos approximations converge. So the convergence
itself is affected by the size of $\tau$ considerably.

\subsection{The case that $A$ is non-Hermitian} \label{subsec:5.2}

Unlike the Hermitian case, for $A$ non-Hermitian, Theorems \ref{thm4}
and \ref{thm2} do not give explicit relationships between the error
norm and $\xi_1$, $\xi_2$. So it is not obvious how to use $\xi_1$ and $\xi_2$ to
estimate the true error norm. However, if we lower our standard a little bit, it
is instructive to make use of these two theorems
to justify the effectiveness of $\xi_1^{rel}$ and $\xi_2^{rel}$ for estimating
the relative error of the Arnoldi approximations to $e^{-\tau A}v$,
as shown below.

Define two functions $g_1(t)=e_m^Te^{-tH_m}e_1$ and $g_2(t)=e_m^T\phi_1(-tH_m)e_1$.
By continuity, there exist two positive constants $c_1,\ c_2\geq 1$ such that
$$
\max_{0\leq t\leq \tau}\abs{g_1(t)}=c_1 | e_m^Te^{-\tau H_m}e_1 |\
\text{and}\ \max_{0\leq t\leq \tau}\abs{g_2(t)}=c_2 | e_m^T\phi_1(-\tau H_m)e_1 |.
$$
From the above, (\ref{result1}) and (\ref{result2}), we have
\begin{equation*}
    \norm{E_m(e^z,\tau)}\leq c_1\tau
    \frac{e^{\tau\mu_2}-1}{\tau\mu_2}\xi_1\
    \text{and}\ \norm{E_m(e^z,\tau)}\leq c_2\tau (1+\gamma_2)\xi_2,
\end{equation*}
where $\mu_2=\mu_2[-A]$ and $\gamma_2$ is defined as (\ref{gamma2}).
Provided that $c_1$ or $c_2$ is not large, $\xi_1$ or $\xi_2$ is expected to
estimate the true error norm reliably. However, we should be aware that $\xi_1$
or $\xi_2$ may underestimate the true error norms considerably in the
non-Hermitian case when $c_1$ or $c_2$ is large.

The following example illustrates the behavior of $\xi_1^{rel}$ and $\xi_2^{rel}$
for $A$ non-Hermitian and their effectiveness of estimating the relative
error in (\ref{relerror}).

\emph{Example 2.} This example is taken from \cite[Example 5.3]{restarted8},
and considers the initial boundary value problem
\begin{eqnarray*}
  \dot{u}-\Delta u+\delta_1u_{x_1}+\delta_2u_{x_2}=0 & & \text{on}\ (0,\,1)^3
  \times(0,\,T), \\
  u(x,t)=0 & & \text{on}\ \partial (0,\,1)^3\ \text{for all}\ t\in[0,\,T], \\
  u(x,0)=u_0(x), & & x\in (0,\,1)^3.
\end{eqnarray*}
Discretizing the Laplacian by the seven-point stencil and the first-order
derivatives by central differences on a uniform meshgrid  with meshsize
$h=1/(n+1)$ leads to an ordinary initial value problem
\begin{eqnarray*}
  \dot{u}(t) &=& -Au(t),\ t\in(0,\,T), \\
  u(0) &=& u_0.
\end{eqnarray*}
The nonsymmetric matrix $A$ of order $N=n^3$ can be represented as the Kronecker
product form
\begin{equation*}
    A=-\frac{1}{h^2}[I_n\otimes(I_n\otimes C_1)+(B\otimes I_n+I_n\otimes C_2)
    \otimes I_n].
\end{equation*}
Here $I_n$ is the identity matrix of order $n$ and
$$
B={\rm tridiag}(1,-2,1),\ C_j={\rm tridiag}
(1+\zeta_j,-2,1-\zeta_j),\ j=1,2,
$$
where $\zeta_j=\delta_jh/2$. This is a popular test problem as
the eigenvalues of $A$ are explicitly known. Furthermore,
if $\mid\zeta_j\mid>1$ for at least one $j$, the eigenvalues
of $A$ are complex and lie in the right plane.
For the spectral properties of $A$, refer to \cite{restarted8,polykry3}.

As in \cite{restarted8}, we choose $h=1/15,\ \delta_1=96,\ \delta_2=128$,
which leads to $N=2744$ and $\zeta_1=3.2$, $\zeta_2\approx 4.27$, and approximate
$e^{-\tau A}v$ where $\tau=h^2$ and $v=[1,1,\ldots,1]^T$. We compare relative
error estimates $\xi_1^{rel}$ and $\xi_2^{rel}$ defined by (\ref{relest}) with the true
relative error defined by (\ref{relerror}). The convergence curves of these three
quantities are depicted in Figure~\ref{figure2}. Similar
to the case where $A$ is Hermitian, we observe that $\xi_1^{rel}$ and $\xi_2^{rel}$
both have excellent behavior, and they mimic the true relative error very well.
Particularly, the $\xi_2^{rel}$ are almost identical to
the true relative errors, and are more accurate than the $\xi_1^{rel}$ by
about one order.

\begin{figure}[!htb]
\centering
\includegraphics[width=7cm,height=7cm]{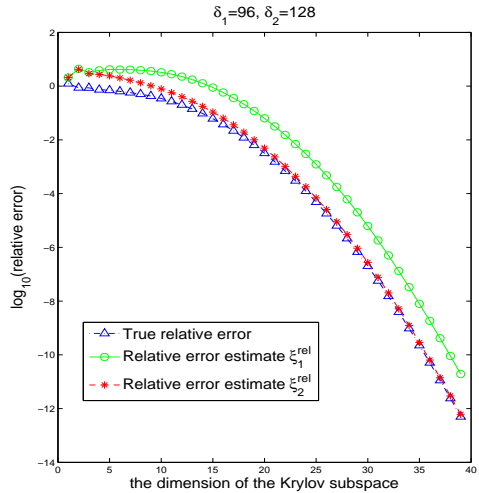}
\caption{Example 2: The relative error estimates
and the true relative error for $e^{-\tau A}v$ for $A$ nonsymmetric
with order $N=2744$.}\label{figure2}
\end{figure}

\subsection{Applications to other matrix functions}\label{subsec:5.3}

Theorem \ref{error expansion1} has indicated that the error expansion works
for all sufficiently smooth functions, so it applies to $\sin(z)$ and $\cos(z)$.
Furthermore, as analyzed and elaborated in Remark~\ref{rem3.2},
just as for $e^{-\tau A}v$, the first term of
(\ref{expansion1}) is generally a good error estimate of the Arnoldi approximation
to these matrix functions acting on a vector. We now confirm the effectiveness
of $\xi_2^{rel}$ for $\sin(A)v$ and $\cos(A)v$.
We also test the behavior of $\xi_1^{rel}$ for these two functions.

\emph{Example 3.} We consider the Arnoldi approximation to $\cos(-\tau A)v$.
Here we choose the matrix $A$ and the vector $v$ as in Example 1 for the
symmetric case and as in Example 2 for the nonsymmetric case, respectively.

\begin{figure}[!htb]
\centering
\begin{minipage}[t]{0.45\linewidth}
    \centering
    \includegraphics[width=6.5cm,height=6.5cm]{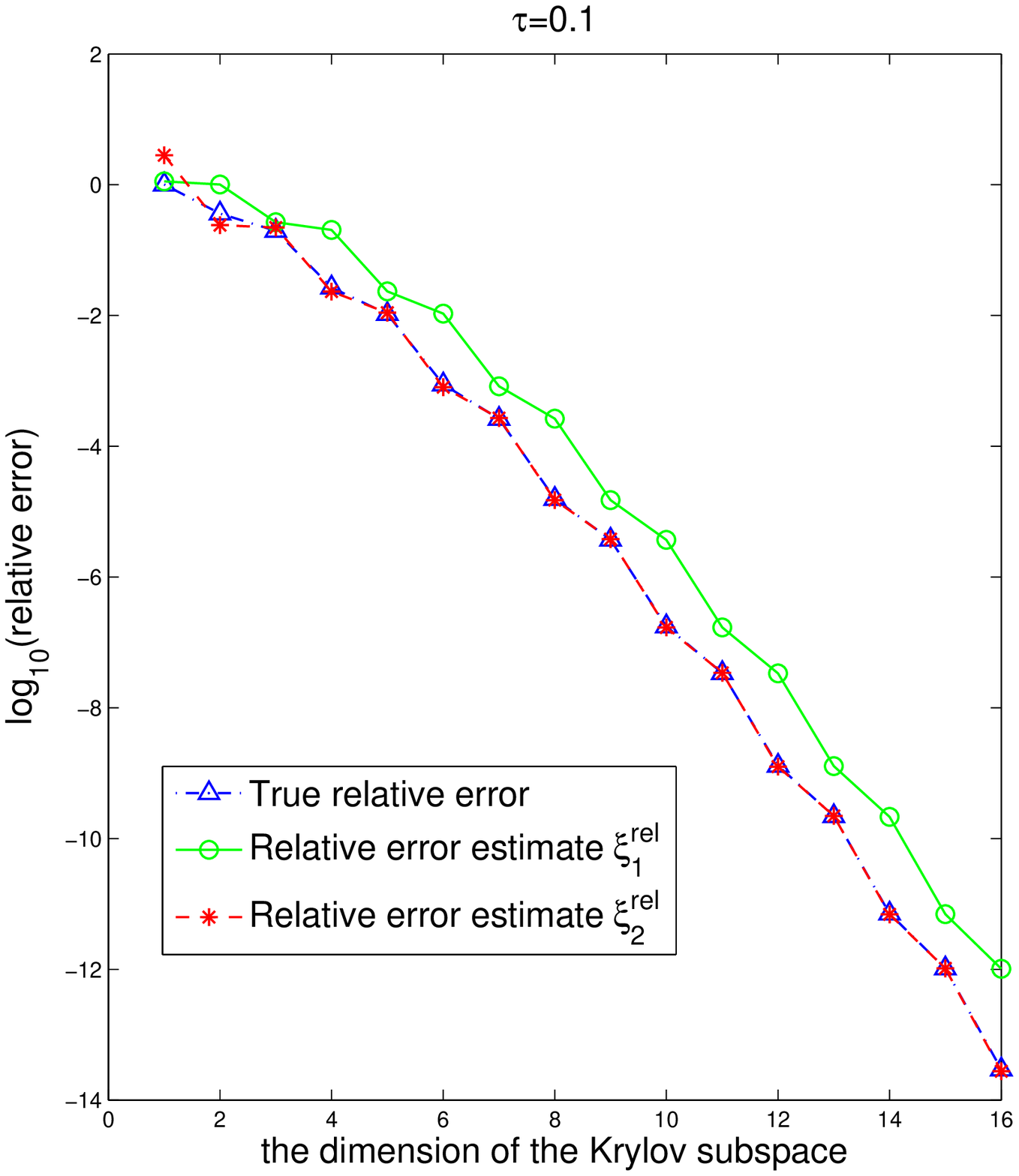}
\end{minipage}
\hspace{2ex}
\begin{minipage}[t]{0.45\linewidth}
    \centering
    \includegraphics[width=6.5cm,height=6.5cm]{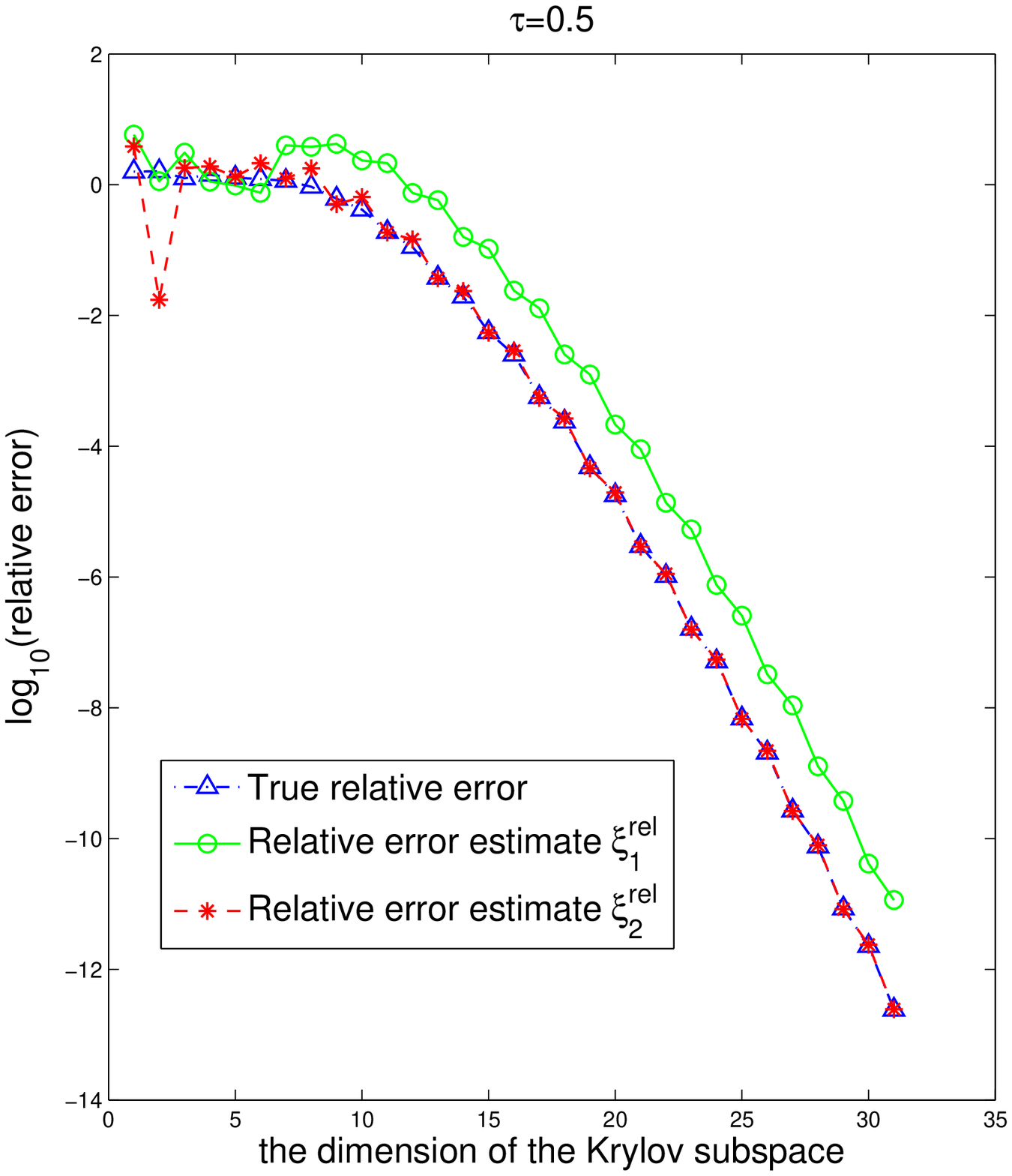}
\end{minipage}
\centering
\includegraphics[width=7cm,height=7cm]{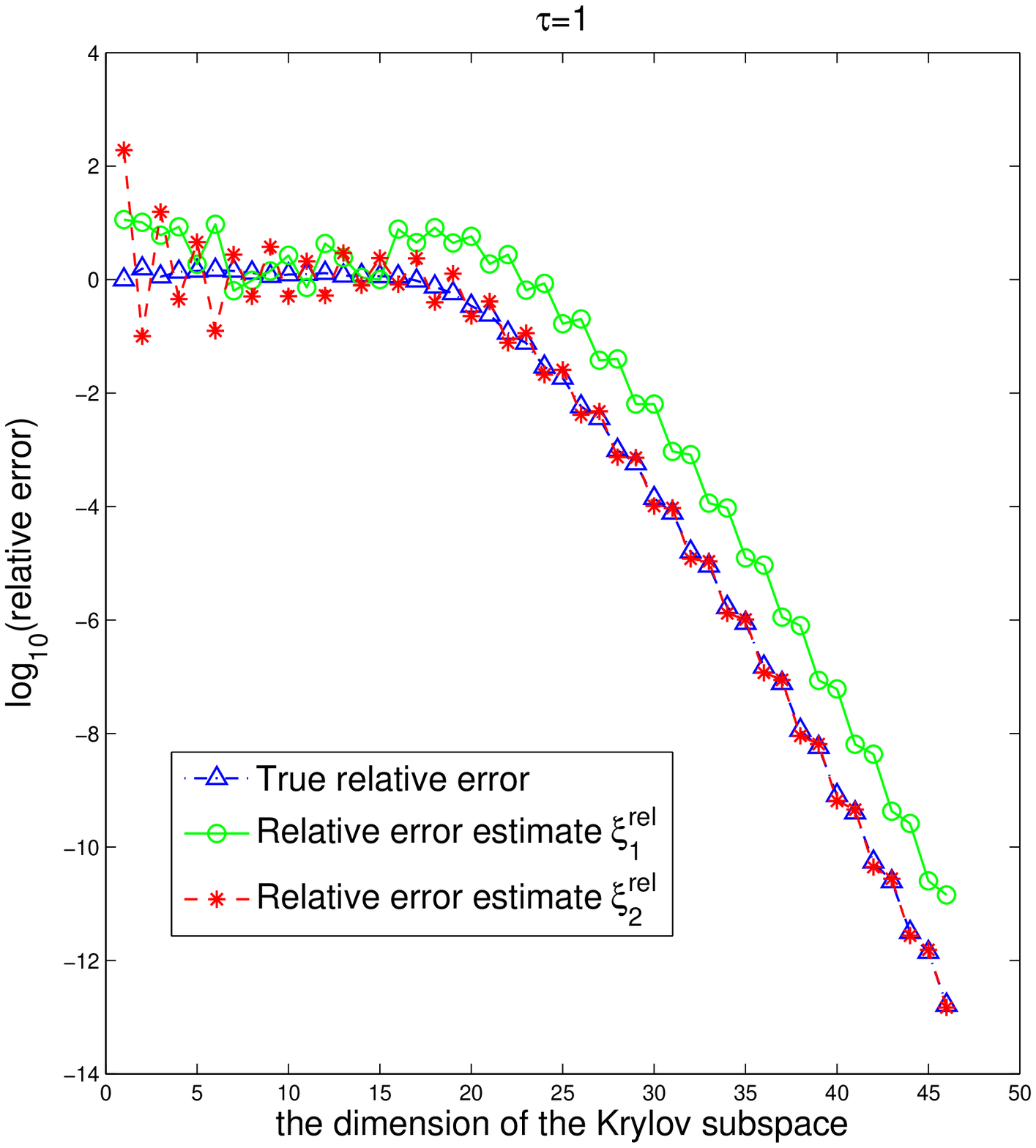}
\caption{Example 3: The relative error estimates and the true relative
error for $\cos(-\tau A)v$ for $A$ symmetric with $N=1001$}
\label{figure3}
\end{figure}

\begin{figure}[!htb]
\centering
\includegraphics[width=6.5cm,height=6.5cm]{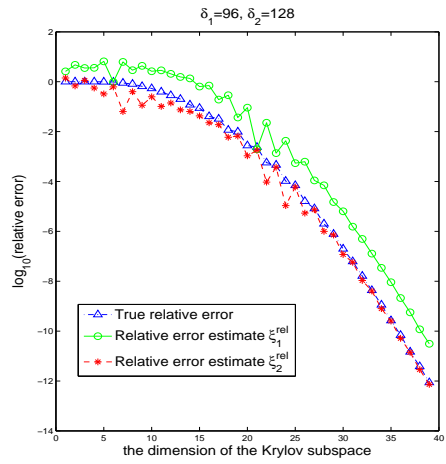}
\caption{Example 3: The relative error estimates and the true relative
error for $\cos(-\tau A)v$ for $A$ is nonsymmetric
with $N=2744$}
\label{figure4}
\end{figure}

Figures~\ref{figure3}--\ref{figure4} illustrate the behavior of
$\xi_1^{rel}$ and $\xi_2^{rel}$ when computing $\cos(-\tau A)v$
for $A$ symmetric and nonsymmetric, respectively. For $A$ symmetric,
the $\xi_2^{rel}$ are not accurate and underestimate or overestimate
the true relative errors in the first few steps for
$\tau=0.5, 1$. However, as commented in Section~\ref{subsec:5.1},
this does not cause any problem because, during this stage,
the true relative errors also stay around one and the Lanczos
approximations have no accuracy.
After this stage, $\xi_1^{rel}$ and especially $\xi_2^{rel}$ soon become smooth
and provide accurate estimates for the true relative errors as $m$ increases.
In particular, the $\xi_2^{rel}$ have little difference with the true relative
errors for the given three $\tau$. For the error estimates of the Lanczos
approximations to $\sin(-\tau A)v$, we have
very similar findings, so we do not report
the results on it.

For $\cos(-\tau A)v$ with nonsymmetric, the $\xi_1^{rel}$ and
especially $\xi_2^{rel}$ also exhibit excellent behavior and are
quite accurate to estimate the true relative error for each $m$ when the latter
starts becoming small. Moreover,
the $\xi_2^{rel}$ are more accurate than $\xi_1^{rel}$
and mimic the true relative errors very well when the
Arnoldi approximation starts converging. So we conclude that $\xi_2^{rel}$ is
very reliable to measure the true relative error of the Arnoldi
approximations to other analytic functions. For $\sin(-\tau A)v$,
we have observed very similar phenomena.

\subsection{Applications to other Krylov-like decompositions}

The error expansion in Theorem \ref{error expansion1} holds for
the Krylov-like decomposition, which includes the restarted Krylov subspace
method for approximating $f(A)v$ proposed in \cite{restarted8} as a special
case. We now confirm the effectiveness of $\xi_1^{rel}$ and $\xi_2^{rel}$ for
the restarted Krylov subspace method for approximating $e^{-\tau A}v$
and $\cos(-\tau A)v$.

\emph{Example 4.} Consider the restarted Krylov
algorithm \cite{restarted8} for approximating $e^{-\tau A}v$ and $\cos(-\tau A)v$
by choosing the matrix $A$ and
the vector $v$ as in Example 2. The method is restarted after each $m$ steps
until the true relative error in (\ref{relerror}) drops
below $\epsilon=10^{-12}$. We test the algorithm with $m=5,\,10$, respectively.
\begin{figure}[!htb]
\centering
\begin{minipage}[t]{0.45\linewidth}
    \centering
    \includegraphics[width=6.5cm,height=6.5cm]{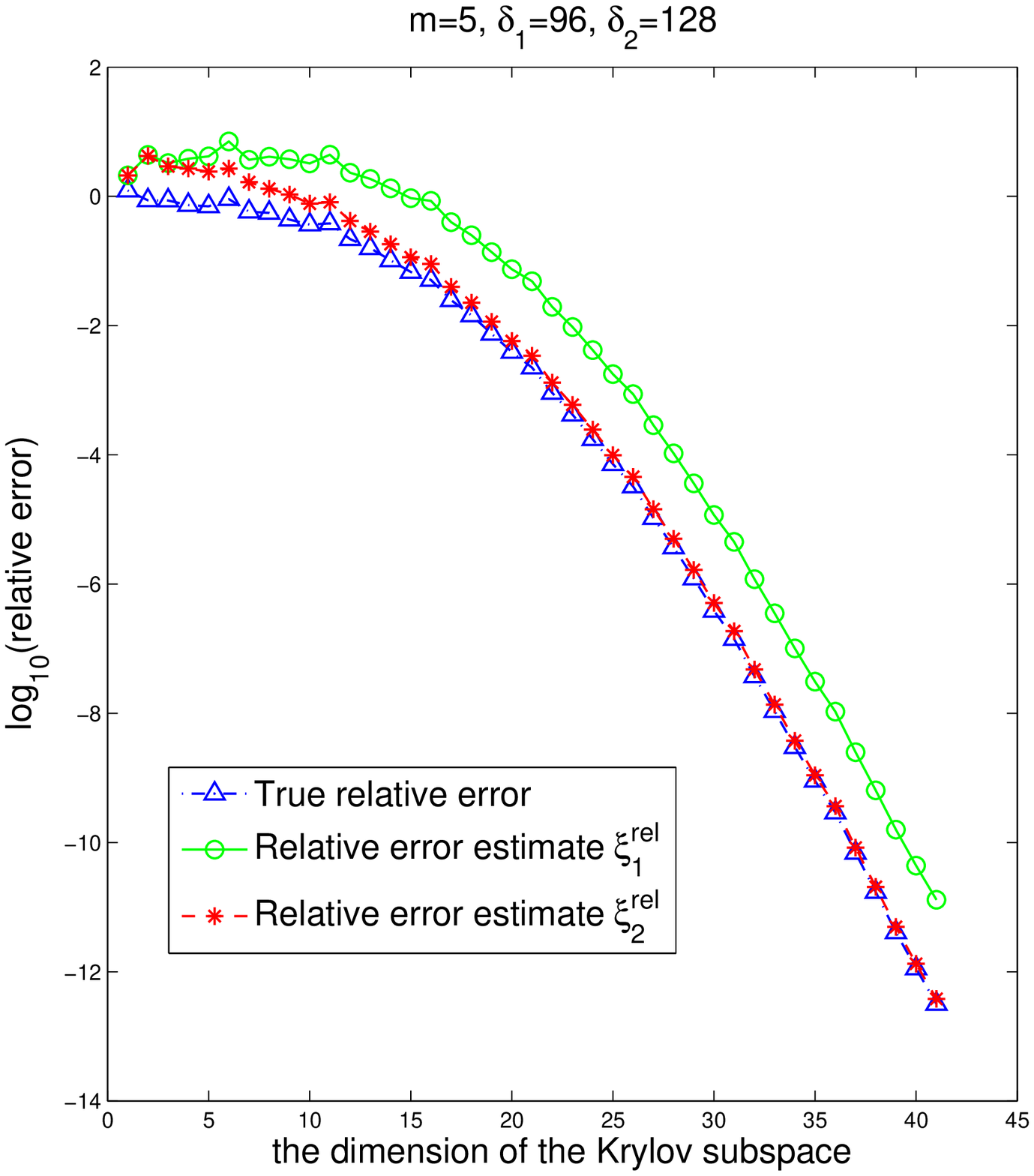}
\end{minipage}
\hspace{2ex}
\begin{minipage}[t]{0.45\linewidth}
    \centering
    \includegraphics[width=6.5cm,height=6.5cm]{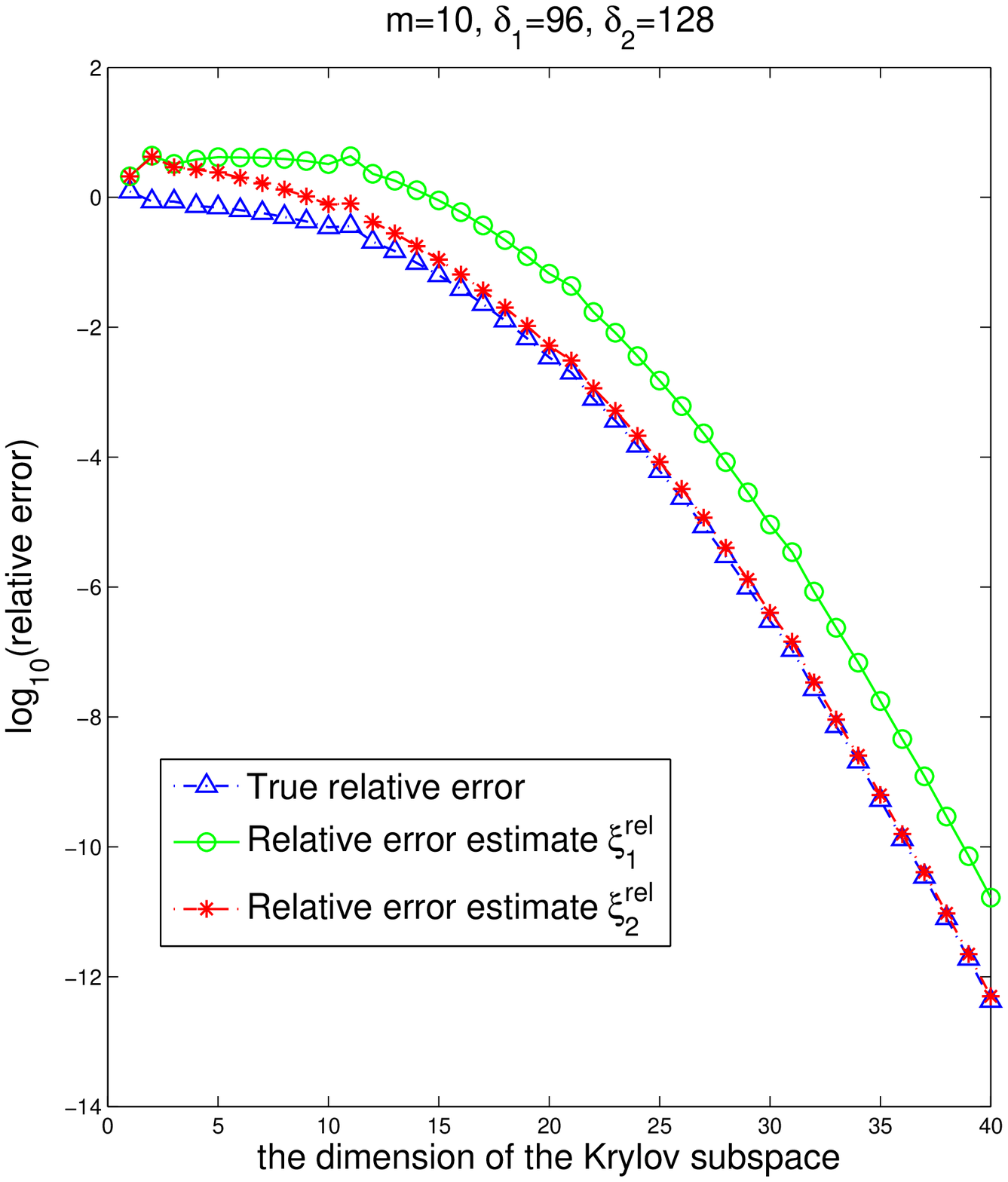}
\end{minipage}
\caption{Example 4: The relative error estimates and the true relative
error for $e^{-\tau A}v$}
\label{figure5}
\end{figure}

\begin{figure}[!htb]
\centering
\begin{minipage}[t]{0.45\linewidth}
    \centering
    \includegraphics[width=6.5cm,height=6.5cm]{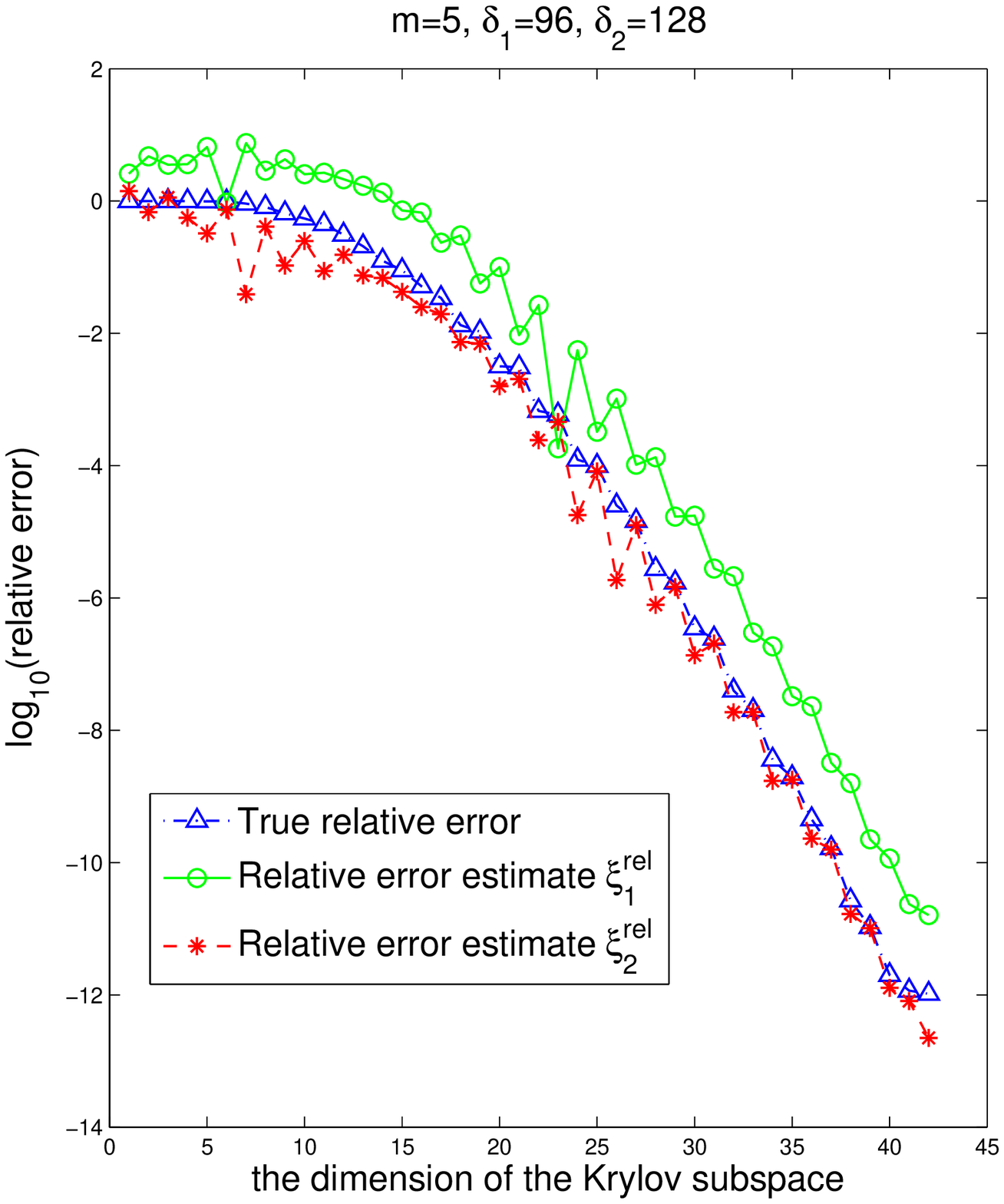}
\end{minipage}
\hspace{2ex}
\begin{minipage}[t]{0.45\linewidth}
    \centering
    \includegraphics[width=6.5cm,height=6.5cm]{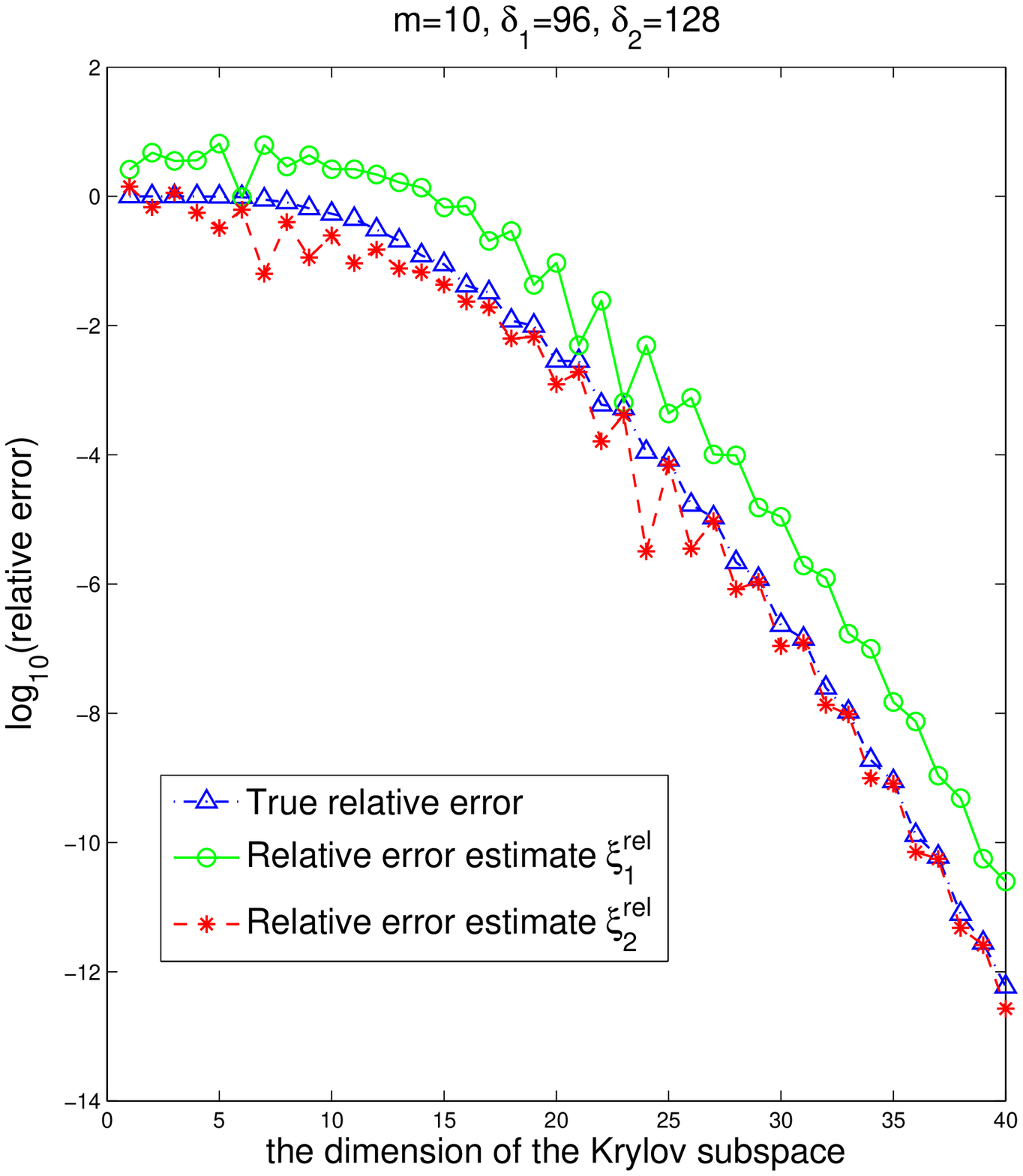}
\end{minipage}
\caption{Example 4: The relative error estimates and the true relative
error for $\cos(-\tau A)v$}
\label{figure6}
\end{figure}

Figures~\ref{figure5}--\ref{figure6} illustrate the behavior of
$\xi_1^{rel}$ and $\xi_2^{rel}$ when computing $e^{-\tau A}v$ and $\cos(-\tau A)v$
by the restarted algorithm,
where the $x$-axis denotes the sum of dimensions of Krylov subspaces.
As in the case that the (non-restarted)
Arnoldi approximations in Example 2, we observe that for approximating
$e^{-\tau A}v$ by the restarted algorithm,
both $\xi_1^{rel}$ and $\xi_2^{rel}$ exhibit excellent behavior,
and the $\xi_2^{rel}$ are almost identical to
the true relative errors, and are more accurate than the $\xi_1^{rel}$ by
about one order. For approximating $\cos(-\tau A)v$, similar to Example 3,
the $\xi_1^{rel}$ and $\xi_2^{rel}$ are still quite accurate to estimate
the true relative errors when the latter starts becoming small. We have
observed the same behavior for approximating $\sin(-\tau A)v$. Particularly,
the $\xi_2^{rel}$ are considerably better than the $\xi_1^{rel}$ and mimic
the true relative errors
very well when the restarted Arnoldi approximation starts converging.
Therefore, we may well claim that, for both non-restarted
Arnoldi approximations and the restarted Krylov-like approximations,
$\xi_2^{rel}$ is very reliable to measure the true relative error
of the Krylov-like approximations for $f(z)=e^z$ and $\cos(z),\ \sin(z)$.

\section{Conclusion} \label{sec:6}

We have generalized the error expansion of the Arnoldi approximation to
$e^Av$ to the case of Krylov-like approximations for sufficiently smooth
functions $f(z)$.
We have derived two new a priori upper bounds for the Arnoldi approximation
to $e^{-\tau A}v$ and established more compact results for $A$ Hermitian. From
them, we have proposed two practical a posteriori error estimates
for the Arnoldi and Krylov-like approximations.
For the matrix exponential, based on the new error expansion, we have
quantitatively proved that the first term of the expansion is a
reliable estimate for the whole error, which has been numerically confirmed to
be very accurate to estimate the true error. For sufficiently smooth functions
$f(z)$, we have shown why the first term of the error expansion can also
be a reliable error estimate for the whole error. We have numerically confirmed
the effectiveness of them for the cosine and sine matrix functions
frequently occurring in applications. It is worthwhile to point out
that $\xi_2$ is experimentally more accurate than $\xi_1$ for the exponential,
cosine and sine functions.

We have experimentally found that, for $A$ Hermitian, the reliability of
$\xi_1^{rel}$ and $\xi_2^{rel}$ is not affected by
$\gamma_1$ and $\gamma_3$, respectively, and they
are equally accurate to mimic the true relative error for
greatly varying $\gamma_1$ and $\gamma_3$. Therefore, we conjecture
that $\gamma_1$ and $\gamma_3$ can be replaced by some other better
forms, at least for $A$ real symmetric semipositive or negative,
whose sizes, unlike $e^{\tau (b-a)}$,
vary slowly with $\tau$ increasing and the spectrum of
$A$ spreading.

\bigskip

{\bf Acknowledgements}. We thank two referees very much for their
very careful reading, valuable comments and constructive suggestions,
which enable us to improve the presentation considerably.

\small
\bibliographystyle{siam}

\end{document}